\numberwithin{equation}{section}
\newlength{\ei}\ei=0.0138888889em
\theoremstyle{plain}
\newtheorem{Thm}{Theorem}[section]   % STARTS each SECTION
\newtheorem{Prop}[Thm]{Proposition}
\newtheorem{Lem}[Thm]{Lemma}
\newtheorem{Cor}[Thm]{Corollary}
\theoremstyle{rem}
\newtheorem{Rem}[Thm]{Remark}
\newcommand{\rfia}[1]{\makebox[\parindent][l]{%
                     \makebox[0em][r]{\rm(}\sf#1\rm)}}
\newcounter{ABCcB}
\newcommand{\theABCcC}{\alph{ABCcB}}
\newenvironment{ABC}{\begin{list}{%    VE LABELWIDTH 1em + LABELSEP 0.5em
  \rfia{\theABCcC}}{\usecounter{ABCcB} \topsep 0ex \partopsep 0ex \itemsep0ex
  \parsep=\parskip \leftmargin 0em \rightmargin 0em \itemindent=\parindent
  \listparindent=\parindent  \labelsep 0.5em \labelwidth 0.5em }}{\end{list}}
\newcommand{\href}[2]{{#2}}
\newcommand{\hreft}[1]{{\href{#1}{\tt\url{#1}}}}
\newenvironment{keyword}[1]{\textsc{Keywords: }\textsf{#1}}{}
\newcommand{\MSC}[1]{\texttt{#1}}
\newcommand{\Ew}{\mathop{\rm {{}E{}}}\nolimits} % mathop
\newcommand{\Cov}{\mathop{\rm Cov}\nolimits}    % extra \,space
\newcommand{\ve}{\varepsilon}
\newcommand{\SSs}{\scriptscriptstyle}
\newcommand{\Ts}{\textstyle}
\newcommand{\tr}{\mathop{\rm{} tr{}}}
\newcommand{\R}{\mathbb R}
\newcommand{\N}{\mathbb N}
\newcommand{\EM} {{\mathbb I}}
\newcommand{\Tfrac}[2]{\textstyle\frac{#1}{#2}}
\newlength{\SyW}
\newcommand{\boma}[1]{\mbox{$#1$}}%{\mbox{\boldmath$#1$}}
\def\beq{\begin{equation}}
\def\eeq{\end{equation}}
\begin{document}
%-----------------------------------------------------------------------------
\title{Robust Kalman tracking and smoothing with propagating
 and non-propagating outliers}
%-----------------------------------------------------------------------------
\author{Peter Ruckdeschel\\
{\small Fraunhofer ITWM, Abt.\ Finanzmathematik,}\\
{\small Fraunhofer-Platz 1, 67663 Kaiserslautern, Germany}\\
{\small and TU Kaiserslautern, AG Statistik, FB.\ Mathematik,}\\
{\small P.O.Box 3049, 67653 Kaiserslautern, Germany}\\
{e-mail: }\href{mailto:peter.ruckdeschel@itwm.fraunhofer.de}{\tt\small peter.ruckdeschel@itwm.fraunhofer.de}
\\[2ex]
Bernhard Spangl\\
{\small University of Natural Resources and Applied Life Sciences,}\\
{\small Institute of Applied Statistics and Computing,}\\
{\small Gregor-Mendel-Str.\ 33, 1180 Wien, Austria}\\
{e-mail: }\href{mailto:bernhard.spangl@boku.ac.at}{\tt\small bernhard.spangl@boku.ac.at}
\\[2ex]
Daria Pupashenko\\
{\small Hochschule Furtwangen, Fak. Maschinenbau und Verfahrenstechnik,}\\
{\small Jakob-Kienzle-Straße 17, 78054 Villingen-Schwenningen, Germany}\\
{\small and TU Kaiserslautern, AG Finanzmathematik, FB.\ Mathematik,}\\
{\small P.O.Box 3049, 67653 Kaiserslautern, Germany}\\
{e-mail: }\href{mailto:pud@hs-furtwangen.de}{\tt\small pud@hs-furtwangen.de }
\\[2ex]
}
\date{\today}
\maketitle
%-----------------------------------------------------------------------------
\begin{abstract}
A common situation in filtering where classical Kalman filtering
does not perform particularly well is tracking in the presence
of propagating outliers. This calls for robustness understood in
a distributional sense, i.e.; we enlarge the distribution
assumptions made in the ideal model by suitable neighborhoods.
Based on optimality results for distributional-robust Kalman
filtering from \citet{Ru:01D,Ru:10a}, we propose new robust
recursive filters and smoothers designed for this purpose
as well as specialized versions for non-propagating outliers.
We apply these procedures in the context of a GPS problem
arising in the car industry. To better understand these filters,
we study their behavior at stylized outlier patterns
(for which they are not designed) and compare them to other
approaches for the tracking problem. Finally, in a simulation
study we discuss efficiency of our procedures in comparison
to competitors.
\end{abstract}
%-----------------------------------------------------------------------------
\begin{keyword}
\MSC{93E11,62F35}
\end{keyword}
%
%
%-----------------------------------------------------------------------------
\section{Introduction}
%-----------------------------------------------------------------------------
\paragraph{Motivation}
State space models (SSMs) build a flexible but still manageable class of dynamic
models, and together with corresponding attractive procedures
as the Kalman filter and its extensions they provide an immensely useful
tool for a wide range of applications.
In this paper we are focussing on an application in engineering
in the context of a GPS problem arising in the car industry with
different linear and non-linear state space formulations.

It is common knowledge for long that most of the classical procedures,
 used in this domain suffer from lack of robustness, in particular the
 Kalman filters and smoothers which we concentrate on.

The mere notion of robustness however in filtering context is not canonic,
with the general idea to describe stability of the procedure w.r.t.\
variations of the ``input parameters''. The choice of
the ``input parameters'' to look at varies from notion to notion
passing from initial values to bounds on user-defined controls to
distributional assumptions. As general in robust statistics, we are concerned with
distributional or outlier robustness; i.e.; our input parameters
are the model distributions and defining suitable neighborhoods
about the ideal model we allow for deviations in the respective
assumptions which capture various types of outliers.

 The amount of literature on robustifications of these procedures is huge, so we do not
attempt to give a comprehensive account here. Instead, we refer to the surveys given
in \citet{Er:78}, \citet{Ka:Po:85}, \citet{Du:St:87},
\citet{Sch:M:94}, \citet{Ku:01}, and to some extent \citet[Sect.~1.5]{Ru:01D}.
In Section~\ref{rLSsec.AO} below, we list references related more
closely to our actual approach.

\paragraph{Problem statement}
In our context these outliers may be system-endogenous (i.e., propagating)
or -exogenous (i.e. non-propagating), which induces the somewhat conflicting
goals of tracking and attenuation.

If we head for robust optimality in the sense of minimizing mean squared
error on neighborhoods, the standard outlier models from \citet{F:72}
pose problems barely tractable in closed form, although some approximate solutions have
been given, e.g., in \citet{M:M:77}. A way out is given by outliers
of substitutive type where closed-form saddle-points
could be derived for the attenuation problem in filtering in \citet{Ru:01D}
which involve the hard-to-compute ideal conditional expectation. If however
this conditional expectation is linear, we come up with an easy and fast
robustification (rLS) which also is available in multivariate settings.
Although exact linearity can be disproved, approximate linearity usually
holds, at least in a central region of the distribution.

This approach has been generalized to simple tracking problems where
we can completely observe the states (i.e., the observation matrix
is invertible) in \citet{Ru:10a,Ru:10b}.
In this paper, we consider the general situation, i.e., tracking in
situations where observation dimension is lower than the one of the 
states.
To this end we propose a suitable generalization of the rLS to this
tracking problem and discuss its limitations.

To be able to apply these techniques in the EM algorithm for parameter estimation in
state space models (see \citet{Sh:St:82}) we also generalize our
procedures to the fixed interval smoothing problem where we come
up with a strictly recursive solution as in the classical case going
backward from the last state to first one.

As needed in the application, we also make available our techniques
for Extended Kalman filtering and smoothing, which to the best
of our knowledge is novel.

\paragraph{Organization of the paper}
Our paper is organized as follows:
In the text, the focus is on an understanding of the procedures and their
behavior at data, so we delegate mathematical results to the appendix.
We put some emphasis though on complete specification of the procedures.

In Section~\ref{GenSetup}, we introduce
the general framework of linear, time varying SSMs and their extension
as to outlier models. In Section~\ref{rLSsec} we define the filtering
and smoothing problems and recall the classical optimal solutions,
i.e., the Kalman filter and smoother and contrast these to our robustifications
based on the rLS. This section also introduces the central procedure of this paper, the rLS.IO,
and gives the necessary generalizations
to cover the smoothing and the Extended Kalman filter case, and,
for comparison, introduces other robust non-parametric alternatives.
In Section~\ref{Stylized}, we study their behavior in the ideal
situation and at stylized outlier patterns for which they are not necessarily designed.
In particular we demonstrate that the tracking problem at a model with a
non-trivial null space of the observation matrix leads to problems
where, at least at filtering time, for certain outliers, any procedure must fail.
Section~\ref{ApplSect} presents an application of our procedures
in the car industry dealing with GPS problems arising there. We sketch
three different SSMs used to model this situation, in particular including a
non-linear one, making robust Extended Kalman filters indispensable.
Section~\ref{SimSect} provides a comparative simulation
study in which the outliers are generated according to the ones
for which the procedures have been defined.
Conclusions are gathered in Section~\ref{SumSect}.

As to the mathematical
details to our procedures, Appendix~\ref{OptimClKF} presents some
derivation of the optimality of the Kalman filter among all
linear filters without any rank-conditions for the arising matrices
and under more general semi-norms than the Euclidean one.
Appendix~\ref{rLS.AO.opt} gives a brief account on the optimality
of the rLS, citing relevant results. Finally, Appendix~\ref{OptRLSIO} contains the
mathematical core of this paper, giving the necessary generalizations
in order to translate the optimality results from the AO case
to the tracking problem.

%-----------------------------------------------------------------------------
\section{Setup}\label{GenSetup} %Peter
%-----------------------------------------------------------------------------
\subsection{Ideal model} \label{idmodel}
%-----------------------------------------------------------------------------
We consider a linear SSM consisting in an  unobservable $p$-dimensional
state $X_t$ evolving according to a possibly time-inhomo\-geneous
vector autoregressive model of order $1$  with
innovations $v_t$ and transition matrices $F_t \in\R^{p\times p}$, i.e.,
\begin{equation} \label{VAR1}
X_t=F_t  X_{t-1}+ v_t
\end{equation}
a $q$-dimensional observation $Y_t$ which is a linear transformation
$X_t$ involving an additional observation error $\ve_t$ and a corresponding
observation matrix $Z_t\in\R^{q\times p}$,
\begin{equation}
Y_t =Z_t  X_t+ \ve_t \label{linobs}
\end{equation}
In the ideal model we work in a Gaussian context, that is we assume
\begin{align}
&  v_t \stackrel{\rm \SSs indep.}{\sim} {\cal N}_p(0,Q_t), \qquad
\ve_t \stackrel{\rm \SSs indep.}{\sim} {\cal N}_q(0,V_t),  \qquad
 X_0  \stackrel{\phantom{\rm \SSs indep.}}{\sim}  {\cal N}_p(a_0,Q_0),
 \label{normStart}\\
&{\mbox{$ \{X_0, v_s, \ve_t,  \;s,t\in\N\}$ stochastically independent}}
\end{align}
For this paper, we assume the hyper--parameters $F_t,Z_t,Q_t,V_t,a_0$ to be known.
%

%-----------------------------------------------------------------------------
\subsection{Deviations from the ideal model} \label{devidmod}
%-----------------------------------------------------------------------------
%
As announced, these ideal model assumptions for robustness considerations
are extended by allowing (small) deviations, most prominently generated
by outliers.
In our notation, suffix ``${\Ts \rm id}$'' indicates the {\it id}eal setting,
``${\Ts \rm di}$'' the {\it di}storting (contaminating) situation, ``${\Ts \rm re}$''
the {\it re}alistic, contaminated situation.

\paragraph{AOs and IOs}
In time series context the most important distinction to be made
as to outliers is whether they propagate or not. To label these
different types, for historical reasons,
we use the terminology of \citet{F:72} (albeit in a somewhat more
general sense). Fox distinguishes {\em innovation outliers} (or IOs)
which enter the state layer and hence propagate
and {\em additive outliers} (or AOs) which only affect single observations
and do not propagate. Originally, AOs and IOs denote gross errors affecting
the observation errors and the innovations, respectively. We use these terms
in a wider sense: \textit{IO}s  also may cover level shifts
or linear trends which would not be included in the original definition.
Similarly \textit{AO} here will denote general exogenous outliers
which do not propagate.

More specifically, our procedures rLS.AO and rLS.IO defined below
both assume substitutive outliers, but should also provide protection
to some extent to arbitrary outliers of (wide-sense) AO respectively
IO type. To be precise, rLS.AO assumes a substitutive outlier (SO) model
already used by \citet{B:S:93} and \citet{Bi:Pa:94}, i.e.,
\begin{equation}
 Y^{\rm\SSs re} = (1-U)  Y^{\rm\SSs id} + U Y^{\rm\SSs di},
 \qquad U\sim {\rm Bin}(1,r) \label{YSO}
\end{equation}
for SO-contamination radius  $0\leq r\leq 1$
specifying the size of the corresponding neighborhood, and where
$U$ is assumed independent of
$(X,Y^{\rm\SSs id})$ and $(X,Y^{\rm\SSs di})$ as well as
 \begin{equation}\label{indep2}
 Y^{\rm\SSs di},\; X\quad \mbox{independent}
 \end{equation}
As usual, the contaminating distribution ${\cal L}(Y^{\rm\SSs di})$
is arbitrary, unknown and uncontrollable.

Similarly rLS.IO assumes that \eqref{VAR1} is split up into two steps,
\begin{equation}\label{IOSO}
\tilde X_t = F_t X^{\rm\SSs re}_{t-1} + v^{\rm\SSs id}_t, \qquad
X^{\rm\SSs re}_t = (1-\tilde U_t)  \tilde X_t + \tilde U_t X_t^{\rm\SSs di},
\qquad Y_t^{\rm\SSs re} = Z_t X^{\rm\SSs re}_t + (1-\tilde U_t)\ve^{\rm\SSs id}_t
\end{equation}
where $X_{t-1}^{\rm\SSs re}$ is the state according to
the contaminated past, $v^{\rm\SSs id}_t$ and $\ve^{\rm\SSs id}_t$ are
an ideally distributed innovation respectively observation error,
and $\tilde U_t$ and $ X^{\rm\SSs di}_t$ are defined
in analogy to $U_t$ and $ Y^{\rm\SSs di}$ (i.e., with independence
from all ideal distributions and the past).

\paragraph{Different and competing goals induced by AOs and IOs}

Due to their different nature, as a rule, a different reaction in
the presence of IOs and AOs is required. As AOs are exogenous, we would
like to ignore them as far as possible, damping their effect,
while when there are IOs, something has happened in the system, so
 the usual goal will be to  detect these  structural changes as fast as
 possible.

 A situation where both AOs and IOs may occur is more difficult,
 as we cannot distinguish IO from AO type immediately after a
 suspicious observation; it will be treated elsewhere.

%-----------------------------------------------------------------------------
\paragraph{Other deviation patterns}
%-----------------------------------------------------------------------------
Of course the two substitutive outlier types of Section~\ref{devidmod}
are by no means exhaustive: Trends and level shifts are not directly
covered, neither are patterns where the outlier mechanism may know
about the past like the patchy outliers described in \citet{Ma:Yo:86}.
Another type of outliers are outliers in the oscillation behavior,
in both amplitude/scale and frequency, or more generally spectral
outliers as arising in robustly estimating
spectral density functions, compare \citet{Fr:Po:83,Fra:85}, and
\citet{Span2008}.
We try to account  at least for some of them in Section~\ref{Stylized}.

%-----------------------------------------------------------------------------
\section{Kalman filter and smoother and robust alternatives} \label{rLSsec}
%-----------------------------------------------------------------------------
\paragraph{Filter problem}
The most important problem in SSM formulation is the reconstruction of
the unobservable states $X_t$ by means of the observations $Y_t$.
For abbreviation let us denote
\begin{equation}
Y_{1:t}=(Y_1,\ldots,Y_t), \quad Y_{1:0}:=\emptyset
\end{equation}
Using MSE risk, the optimal reconstruction is the solution to
\begin{equation}
\Ew \big| X_t-f_t\big|^2 = \min\nolimits_{f_t},\qquad
  f_t \mbox{ measurable w.r.t.\ } \sigma(Y_{1:s})  \label{MSEpb}
\end{equation}
We focus on filtering ($s=t$) in this paper, while $s<t$ makes for a
prediction, and $s>t$ for a smoothing problem.
%
%-----------------------------------------------------------------------------
\subsection{Classical method: Kalman filter and smoother} \label{classKalmss}
%-----------------------------------------------------------------------------
The general solution to \eqref{MSEpb}, the corresponding
conditional expectation  $\Ew[ X_t|Y_{1:s}]$ usually is rather
expensive to compute. Hence as in  the Gauss-Markov setting,
restriction to linear filters is a common way out.
In this context, \citet{Kal:60} introduced a recursive scheme to
compute this optimal linear filter reproduced here for
later reference:
\begin{align}
\!\!\!\!\!\!\mbox{Initialization: }&\!\!\!\!&
 X_{0|0} &= a_0,\qquad &\Sigma_{0|0}&=Q_0 \label{bet1}
 \\
\!\!\!\!\!\!\mbox{Prediction: }&\!\!\!\!
& X_{t|t-1}&= F_t  X_{t-1|t-1} , \qquad &\Sigma_{t|t-1}
&=F_t\Sigma_{t-1|t-1}F_t^\tau + Q_t\label{bet2}%\\
\\
\!\!\!\!\!\!\mbox{Correction: }
&\!\!\!\!& X_{t|t}&=  X_{t|t-1} + K_t \Delta Y_t, \label{bet3}%\\
\!\!\!\quad&  \Delta Y_t &= Y_t-Z_t x_{t|t-1},\nonumber\\
&&K_t&=\Sigma_{t|t-1}Z_t^\tau C_t^{-1},
\nonumber
\!\!\!\quad&  \Sigma_{t|t} &= (\EM_p-K_tZ_t)\Sigma_{t|t-1},\nonumber\\
&&C_t&=Z_t \Sigma_{t|t-1}Z_t^\tau + V_t
\end{align}
where $\Sigma_{t|t}=\Cov(X_t-X_{t|t})$, $\Sigma_{t|t-1}=\Cov(X_t-X_{t|t-1})$,
and $K_t$ is the so-called \textit{Kalman gain}.

The Kalman filter has a clear-cut structure with an initialization, a
prediction, and a correction step. Evaluation and interpretation is
easy, as all steps are linear.
The strict recursivity / Markovian structure of the state equation
allows one to concentrate all information from the past useful for
the future in  $X_{t|t-1}$.\\
This linearity is also the reason for its non-robustness, as
observations $y$ enter unbounded into the correction step. A good
robustification has to be bounded in the observations, otherwise
preserving the advantages of the Kalman filer as
far as possible.
%-----------------------------------------------------------------------------
\subsection{A robustification of the least squares solution (rLS)} \label{rLSsec.AO}
%-----------------------------------------------------------------------------
The idea of the procedures we discuss in this paper are based on
{{\it r}obustifying {\it r}ecursive  {\it L}east {\it S}quares: rLS},
a filter originally introduced for AOs only, compare \citet{Ru:00,Ru:01D}.
Starting from a different route of translating regression M estimators
to SSM context, \citet{B:D:83,B:D:87,C:R:91} arrive at similar weight functions, albeit
with an iterative procedure to solve for the M equations. More recently,
restricted to the particular SSM class of Holt-Winter-Forecasting,
\citet*{G:F:C:10} take up this idea and use regression M estimators, which,
in addition to the present rLS approach involve recursive scale estimation.
The closest recent approach stems from \citet{C:H:11}, who---without translating
their clipping to a bias side condition for an explicit outlier model---show
a Lemma-5-type optimality (see Problem~\eqref{Lem5SO} below) for their
procedure (with application to the special SSM case of exponential 
smoothing). Their solution is different from ours only in the choice of the norm
used for clipping: They use a diagonal weighting matrix $W$ whereas, in the AO case, we use
no weighting; a weighting does appear though in the general IO solution,
compare Lemma~\ref{lem1} below.

In this paper, we also consider an IO-robust version of this concept, and
hence here, in addition to the original definition, we append the suffixes
``.AO'' and ``.IO'' to distinguish the two versions.
Let us begin with (wide-sense) AOs.

With only AOs, there is no need for robustification in the initialization
and prediction step, as no (new) observations enter.
As introduced in \citet{Ru:00}, we robustify the correction step,
replacing $K\Delta Y$ by a Huberization $H_b(K\Delta Y)$ where
 $H_b(x) = x \min\{1, b/\big|x\big|\}$ for some suitably chosen clipping
height $b$ and some suitably chosen norm, natural candidates being Euclidean
and Mahalanobis norm. It turns out that only little is gained if we account
for this modification in the recursion for the filter covariance, so we leave
this unchanged. That is, the only modification in the correction step becomes
\begin{equation} \label{HbDef}
X_{t|t}=  X_{t|t-1} + H_b(K_t \Delta Y_t)
\end{equation}
While this is a bounded substitute for the correction step in the classical Kalman filter, it still
remains reasonably simple, is non iterative and hence especially useful
for online-purposes.

However it should be noted that, departing from the Kalman filter and
at the same time insisting on strict recursivity, we possibly exclude
``better'' non-re\-cur\-sive procedures. These procedures on the other
hand would be much more expensive to compute.

As sketched in Appendix~\ref{rLS.AO.opt}, it can be shown that rLS not
only is plausible, but also has some optimality properties. These
are not the focus of this paper, though, and proofs of these
properties appear elsewhere.

\paragraph{Choice of the clipping height $b$}
For the choice of  $b$, we have two proposals. Both are based on the
simplifying assumption that $\Ew_{\rm\SSs id}  [\Delta X | \Delta Y]$
is linear, which in fact turns out to  only be approximately correct.
The first one chooses $b=b(\delta)$ according to an \citet{Ans:60}
criterion,
    \begin{equation}
     \Ew_{\rm\SSs id} \big|\Delta  X-H_b(K \Delta Y)\big|^2
         \stackrel{!}{=}(1+\delta)
          \Ew_{\rm\SSs id} \big|\Delta  X-K \Delta Y\big|^2
         \label{deltakrit}
    \end{equation}
where $\delta$ may be interpreted as ``insurance premium''
to be paid in terms of efficiency.

The second criterion uses the radius $r\in[0,1]$  of the
neighborhood ${\cal U}^{\rm\SSs SO}(r)$ (defined in \eqref{U-SO}) and
determines $b=b(r)$ such that
    \begin{equation}
     (1-r) \Ew_{\rm\SSs id} (|K \Delta Y |-b)_+ \stackrel{!}{=} r b
         \label{deltakrit2}
    \end{equation}
This produces the minimax-MSE procedure for ${\cal U}^{\rm\SSs SO}(r)$
(compare Section~\ref{rLS.AO.opt}). Generalizing ideas of
\citet*{R:K:R:08}, this criterion can be extended to
situations where we only know that the radius lies in some interval,
but we do not work this out here; for details see \citet{Ru:10a}.

\begin{Rem}
It turns out that in filtering context, the second criterion reflects
much better the problem inherent difficulty of a robustification:
While $10\%$ efficiency loss in the ideal model can be unreachable,
because totally ignoring the new observation $\Delta Y$
would only ``cost'' $5\%$, it may be much too little to produce a sizeable
effect in other models. A radius of $0.1$ however seems to lead to reasonable
choices of $b$ in most models.
\end{Rem}

%-----------------------------------------------------------------------------
\subsection{rLS.IO} \label{rLSsecIO}
%-----------------------------------------------------------------------------
As noted, in the presence of IOs, we want to follow an IO outlier as
fast as possible.

Optimality results on distributional neighborhoods for the IO-case with
the goal of a faster tracking to the best of our knowledge have not been
considered by other authors so far.

The  Kalman filter in this situation does not behave as
bad as in the AO situation, but still tends to be too inert. To improve upon
this, let us first simplify our model to the situation where
we have an unobservable but interesting state $X\sim P^X(dx)$ and where
instead of $X$ we rather observe the sum
\begin{equation} \label{simpAdd}
Y=X+\ve
\end{equation}
This equation reveals a useful symmetry
of $X$ and $\ve$: Apparently
\begin{equation} \label{simpEx}
\Ew[X|Y] = Y-\Ew[\ve|Y]
\end{equation}
Hence we follow $Y$ more closely if we damp estimation of $\ve$, for
which we use the rLS-filer. We should note that doing so, we rely
on ``clean'', i.e., ideally distributed errors $\ve$. With the
obvious replacements, our optimality results from the appendix
translate word by word to a corresponding results for IOs,
compare \citet[Thm.~4.1]{Ru:10b}.

In analogy to the definition of the rLS in equation~\eqref{HbDef},
we set up an IO-robust version of the rLS as follows: We retain
the initialization and prediction step of the classical Kalman filter
and for the correction step, as proved in Corollary~\ref{CorIO}, we note
that in the ideal model, as
$\Ew[\ve_t|\Delta Y_t] = (\EM_q-Z_t K_t)\Delta Y_t$,
the correction step can also be written as
\begin{equation}
X_{t|t} = X_{t|t-1} + Z_t^{\Sigma} \big(\Delta Y_t - \Ew[\ve_t|\Delta Y_t] \big),
\end{equation}
where $Z_t^\Sigma$ is a suitably generalized inverse for $Z_t$,
minimizing the respective MSE among all $\Ew[\ve|\Delta Y]$-measurable
(and square integrable) functions, i.e.,
\begin{equation} \label{ZSigma}
  Z_t^\Sigma:=\Sigma_{t|t-1} Z^\tau_t (Z^\tau_t \Sigma_{t|t-1}Z_t)^-
\end{equation}
The IO robustification then simply consists in replacing
$\Ew[\ve_t|\Delta Y_t]$ by  $H_b((\EM_q-Z_t K_t)\Delta Y_t)$,
i.e., the rLS.IO correction step as
\begin{equation}
X_{t|t} = X_{t|t-1} + Z_t^{\Sigma} [\Delta Y_t - H_b((\EM_q-Z_t K_t)\Delta Y_t)]
\end{equation}
Here the same arguments for the choice of the norm and the clipping
height apply as for the AO-robust version of the rLS.

To better distinguish IO- and AO-robust filters, let us call the
IO-robust version \emph{rLS.IO} and (for distinction) the AO-robust
filter \emph{rLS.AO} in the sequel.

\begin{Rem}
It is worth noting that also our IO-robust version is a filter,
hence does not use information of observations made after the state
to reconstruct; rLS.IO is strictly recursive and non iterative,
hence well-suited for online applications.
\end{Rem}
%-----------------------------------------------------------------------------
\subsection{Extended Kalman filter} \label{EKF}
%-----------------------------------------------------------------------------
In the application we are heading for, in Model~(M3) below, the SSM given by
equations~\eqref{VAR1} and
\eqref{linobs} is only a linearization of a nonlinear SSM given by
\begin{align}
X_t&=f_t(X_{t-1},u_t,v_t)\\
Y_t&=z_t(X_t,w_t,\ve_t)
\end{align}
for smooth, known functions $f_t$ and $z_t$ in the states $X_t$, in the innovations
$v_t$, in the observation errors $\ve_t$ and some user defined controls $u_t$ and
$w_t$. With $\bar v_t=\Ew v_t$, $\bar \ve_t=\Ew \ve_t$, this is linearized to give
the following \textit{Extended Kalman filter}  taken from \citet{W:vM:02}
\begin{align}%[lrclrcl]
\!\!\!\!\!\!\mbox{Initialization: }&\!\!\!\!&
 X_{0|0} &= a_0,\!\!\!\qquad\!\!\! &\Sigma_{0|0}&=Q_0 \label{bet1EKF}
 \\
\!\!\!\!\!\!\mbox{Prediction: }&\!\!\!\!
& X_{t|t-1}&= f_t( X_{t-1|t-1},u_t,\bar v_t), \!\!\!\qquad\!\!\! &\Sigma_{t|t-1}
&=F_t\Sigma_{t-1|t-1}F_t^\tau + B_tQ_tB_t^\tau\label{bet2EKF}%\\
\\
\!\!\!\!\!\!\!\!\!\!\!\!& \mbox{for}\!\!\!\!\!\!&F_t&=\Tfrac{\partial}{ \partial x}f_t(x,u_t,\bar v_t)|_{_{X_{t-1|t-1}}},\!\!\!\qquad\!\!\!
&B_t&= \Tfrac{\partial}{ \partial v}f_t(X_{t-1|t-1},u_t,v)|_{_{\bar v_{t}}}\nonumber\\
\!\!\!\!\!\!\mbox{Correction: }
&\!\!\!\!& X_{t|t}&=  X_{t|t-1} + K_t \Delta Y_t, \label{bet3EKF}%\\
\!\!\!\quad&  \Delta Y_t &= Y_t-z_t (x_{t|t-1},w_t,\bar \ve_t),\nonumber\\
&&K_t&=\Sigma_{t|t-1}Z_t^\tau C_t^{-1},
\nonumber
\!\!\!\quad&  \Sigma_{t|t} &= (\EM_p-K_tZ_t)\Sigma_{t|t-1},\nonumber\\
&&C_t&=Z_t \Sigma_{t|t-1}Z_t^\tau + D_t V_t D_t^\tau,\\
\!\!\!\!\!\!\!\!\!\!\!\!& \mbox{for}\!\!\!\!\!\!&Z_t&=\Tfrac{\partial}{ \partial x}z_t(x,u_t,\bar v_t)|_{_{X_{t|t-1}}},\!\!\!\qquad\!\!\!
&D_t&= \Tfrac{\partial}{ \partial \ve}z_t(X_{t|t-1},u_t,\ve)|_{_{\bar \ve_{t}}}\nonumber
\end{align}
As to the robustification of this algorithm by rLS.AO and rLS.IO, as in the linear case,
we simply replace the term $K_t \Delta Y_t$ by $H_b(K_t \Delta Y_t)$ in the AO case
and by $Z_t^\Sigma(\Delta Y_t- H_b((\EM_q-Z_tK_t)\Delta Y_t)$ in the IO case.

%-----------------------------------------------------------------------------
\subsection{A robust smoother} \label{ssmooth}
%-----------------------------------------------------------------------------
In many situations, in particular for an application of the EM-algorithm
to estimate the hyper-parameters, it is common use to enhance the filtered
values $X_{t|t}$ in retrospective, accounting for the information of
$Y_{t+1:T}$ which is available in the mean time. To retain
recursivity, we use a corresponding backward recursion as to be found in
\citet[Sec.7.4, (4.5)]{A:M:90}:
\begin{equation} \label{smeq}
X_{t|T} = X_{t|t}+ J_{t} (X_{t+1|T}-X_{t+1|t}), \qquad J_{t} =
\Sigma_{t|t} F_{t}^\tau \Sigma_{t+1|t}^{-1}
\end{equation}
with respective update for the smoothing covariance,
\begin{equation}
\Sigma_{t|T} = \Sigma_{t|t}+J_t(\Sigma_{t+1|T}-\Sigma_{t+1|t})J_t^\tau
\end{equation}
Writing \eqref{smeq} as
$X_{t|T} - X_{t|t}=J_t [(X_{t+1|T}-X_{t+1|t+1})+(X_{t+1|t+1}-X_{t+1|t})]$,
we see that the first summand in the brackets of the RHS is just the preceding
iterate of the LHS in the recursion and the second summand is just the
already robustified increment of the correction step in the filter. Hence,
sticking to our outlier models for IO and AO contamination which independently
affect single $X_t$'s and $Y_t$'s the modification only has to be done
in the (new) treatment of $\Delta Y_t$, i.e., in the second summand,
so there is no further need for robustification in
the backwards loop.
%-----------------------------------------------------------------------------
\subsection{A nonparametric approach: median based filters} \label{hybridFsf}
%-----------------------------------------------------------------------------
% Dasha's suggestion:
%------------------------------------------------------------------------------------------------
A nonparametric approach to assess the robustness issues met with Kalman filtering,
which does not use any state space formulation is to use median-type filters for this purpose.
Standard median filters remove outliers and preserve level shifts but their
output does not properly represent linear trends.
The repeated median (RM) has been developed for the extraction of monotonic trends
with intercept and slope  from a time series.

In order to combine advantages of several specifications of location and regression
based median-type filters, a variety of hybrid filters has been introduced in
\citet*{F:B:G:06}. From these filters, in Section~\ref{SimSect}, we use
the regression based and predictive approach {\tt PRMH}. In addition, we use
a location based approach {\tt MMH} for smoothing (notation {\tt MMH} taken
over from {\sf R} package {\tt robfilter}, \citet{F:S:08}).

Because of their high breakdown points for window width $k$ of
$(\lfloor k/2\rfloor +1)/n$ for {\tt PRMH} and $(2\lfloor k/2\rfloor +1)/n$
for {\tt MMH}, see \citet*{F:B:G:06}, these hybrid RM-type filters and smoothers
 are prominent candidates for a first sweep over the data.

For an automatic choice of the window width,
the {\it A}daptive {\it O}n-line {\it R}epeated {\it M}edian {\it R}egression (ADORE) filter
\citep{Schett:09} provides robust on-line extraction by a moving window technique with
adaptive window width selection.
%

%\paragraph{Limitations:}
In our setting, the median-based filters as they stand can only be applied
to situations with one-dimensional observations; generalizations to
higher dimensions beyond mere coordinate-wise treatment have been introduced
in \citet{Schett:09}, but are not dealt with in this paper.
Being non-parametric, these filters know nothing about an underlying
SSM, so cannot be used for state reconstruction, except for the case
that $Z_t=1$ (or for $p=q$, $\EM_q$ in higher dimensions).
So we include these filters only in the steady state examples  of
Sections~\ref{ChInOsc}, \ref{Sec:NonObsAsp} and \ref{SimSect} (Model~(SimB)).

%-----------------------------------------------------------------------------
\section{Behavior of the filters at stylized outlier situations} \label{Stylized}
%-----------------------------------------------------------------------------
In this section, we study the behavior of our filters in the ideal situation
and at stylized outlier patterns for which they are not necessarily
designed---these cover AOs and IOs  in the original sense, their
behavior at trends and level shifts and at changes in the oscillation behavior.
We also look at the effects of a non-trivial null space of the observation
matrix such that the information given by the observation does not suffice
to reconstruct the state.

%-----------------------------------------------------------------------------
\subsection{The ideal situation, spiky outliers and IOs} \label{idSit}
%-----------------------------------------------------------------------------

To start with, we study the behavior of the robustified (extended) Kalman filter
(EKF) in three different situations, namely no contamination, contamination by
(classical) IOs and AOs, and use the following hyper parameters for the SSM:
\begin{equation}
\begin{array}{llll}
\boma{a_0} = \left( \begin{array}{c} 20 \\ 0 \end{array} \right) ~, &
\boma{Q_0} = \left( \begin{array}{cc} 0 & 0 \\ 0 & 0
                         \end{array} \right) ~, \\
\boma{F_t} = \left( \begin{array}{cc} 1 & 1 \\ 0 & 0
                         \end{array} \right) ~, &
\boma{Z_t} = \left( \begin{array}{cc} 0.3 & 1 \\ -0.3 & 1
                         \end{array} \right) ~, &
\boma{Q_t} = \left( \begin{array}{cc} 0 & 0 \\ 0 & 9
                         \end{array} \right) ~, &
\boma{V_t} = \left( \begin{array}{cc} 9 & 0 \\ 0 & 9
                         \end{array} \right) ~.
\end{array}
\end{equation}
We note that the first coordinate of the above state process is a random
walk and therefore non-stationary, whereas the second coordinate is just
white noise.

The innovations $\boma{v}_t$ and the errors $\boma{\varepsilon}_t$
of the observation process are simulated
from a contaminated bivariate normal distribution
\begin{equation} \label{multivariate cnd}
\mathcal{CN}_2 (r, \boma{0}, \boma{R}, \boma{\mu}_c, \boma{R}_c) =
(1-r) \mathcal{N}_2 (\boma{0}, \boma{R}) +
    r \mathcal{N}_2 (\boma{\mu}_c, \boma{R}_c) ~,
\end{equation}
with $r$, the amount of contamination, set to 10\% and
where %,
$\boma{R} = \boma{Q}_t$ in case of the innovations $\boma{v}_t$ and
$\boma{R} = \boma{V}_t$ in case of the observation errors
${\boma{\varepsilon}_t}$, and
the moments of the contaminating distribution are given by
\beq
\boma{\mu}_c^\tau=(25,30),\quad
\boma{R}_c= {\rm diag} (0.9,0.9)~.
\eeq

Then the bivariate filter estimates of the three simulated state-space models
are computed using the classical Kalman and rLS filters.
\begin{figure}[tp]
\centering
\includegraphics[width=\textwidth]{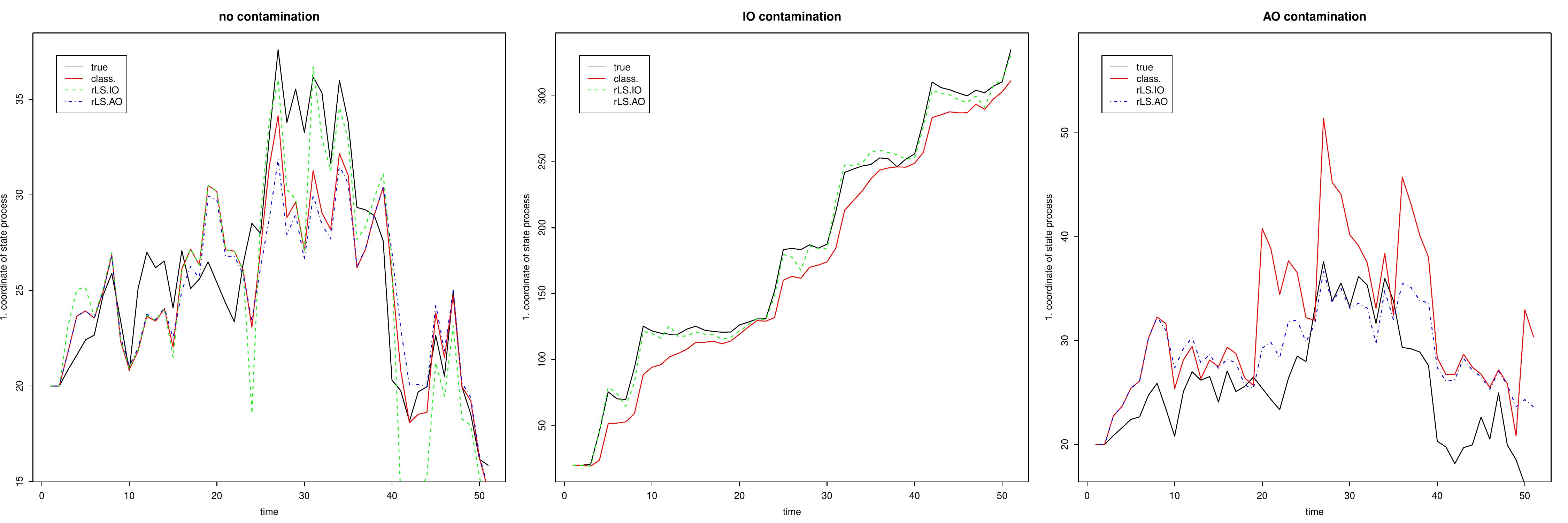}
\caption{Typical results of the state-space model for different
situations: left no contamination, middle IO contamination, right
AO contamination}
\label{exampleLinearEKF}
\end{figure}
In Figure~\ref{exampleLinearEKF}, we show typical realizations  of the 
state process together with their filter estimates 
for different contamination situations.
Only the first coordinate of the estimated state vector is plotted. 
The thick black line is the true state process, while
classical Kalman, rLS.IO, and rLS.AO filters are plotted as light red line,
dotted green line, and dot-dashed blue line, respectively.
We note that %the 
contamination by additive outliers cannot be seen directly
in Figure~\ref{exampleLinearEKF} because only the observation equation is
affected. However, the spikes, especially visible for the filter estimate 
of the classical Kalman filter, do indicate additive outliers.

\paragraph*{Ideal situation:}

The classical Kalman filter as well
as its robustified versions, yield almost the same results.

\paragraph*{IO contamination:}

The rLS.IO filter is able to follow the true state almost immediately, 
whereas the classical Kalman filter is only able to track the true 
state with a certain delay.

\paragraph*{Spiky outliers:}

The rLS.AO filter is not affected by additive outliers whereas 
the classical Kalman filter is prone to them.

%-----------------------------------------------------------------------------
\subsection{Changes in oscillation pattern and level shifts}\label{ChInOsc}
%-----------------------------------------------------------------------------

Next, in situations with outliers scaling on different frequencies, 
we compare our robustified versions of the Kalman filter to a 
non-parametric filtering method, i.e., the ADORE filter proposed 
by \citet{Schett:09}---with an automatic selection of the window 
width. 
As outlier situations, we consider (classical) IOs and AOs (with
contamination radius $r=10\%$ each) 
as well as the special endogenous case where  we substitute part of the state by
a completely artificial signal. 
For the SSM we use an AR(2) process, i.e., an autoregressive
process of order 2, as state process, and 
 the following hyper parameters:
\begin{equation}
\begin{array}{llll}
\boma{a_0} = \left( \begin{array}{c} 0 \\ 0 \end{array} \right) ~, &
\boma{Q_0} = \left( \begin{array}{cc} 0 & 0 \\ 0 & 0
                         \end{array} \right) ~, \\
\boma{F_t} = \left( \begin{array}{cc} 1 & -0.9 \\ 1 & 0
                         \end{array} \right) ~, &
\boma{Z_t} = \left( \begin{array}{cc} 1 & 0
                         \end{array} \right) ~, &
\boma{Q_t} = \left( \begin{array}{cc} 1 & 0 \\ 0 & 0
                         \end{array} \right) ~, &
\boma{V_t} = \left( \begin{array}{c} 1
                         \end{array} \right) ~.
\end{array}
\end{equation}
The innovations $\boma{v}_t$ in the IO situation
again stem from a contaminated bivariate normal distribution
given in (\ref{multivariate cnd}) with moments of the contaminating 
distribution given by
\beq
\boma{\mu}_c^\tau=(30,0),\quad
\boma{R}_c= \left( \begin{array}{cc} 0.1 & 0 \\ 0 & 0
                         \end{array} \right)  ~.
\eeq

The contaminating distribution of the errors  $\boma{\varepsilon}_t$ of
the observation process is $\mathcal{N} (10, 0.1)$.
Then, for all different situations the filter estimates
are computed using the rLS filter and the ADORE filter.
Moreover, also the classical Kalman filter estimates were calculated.

\begin{figure}[tp]
\centering
\includegraphics[width=\textwidth]{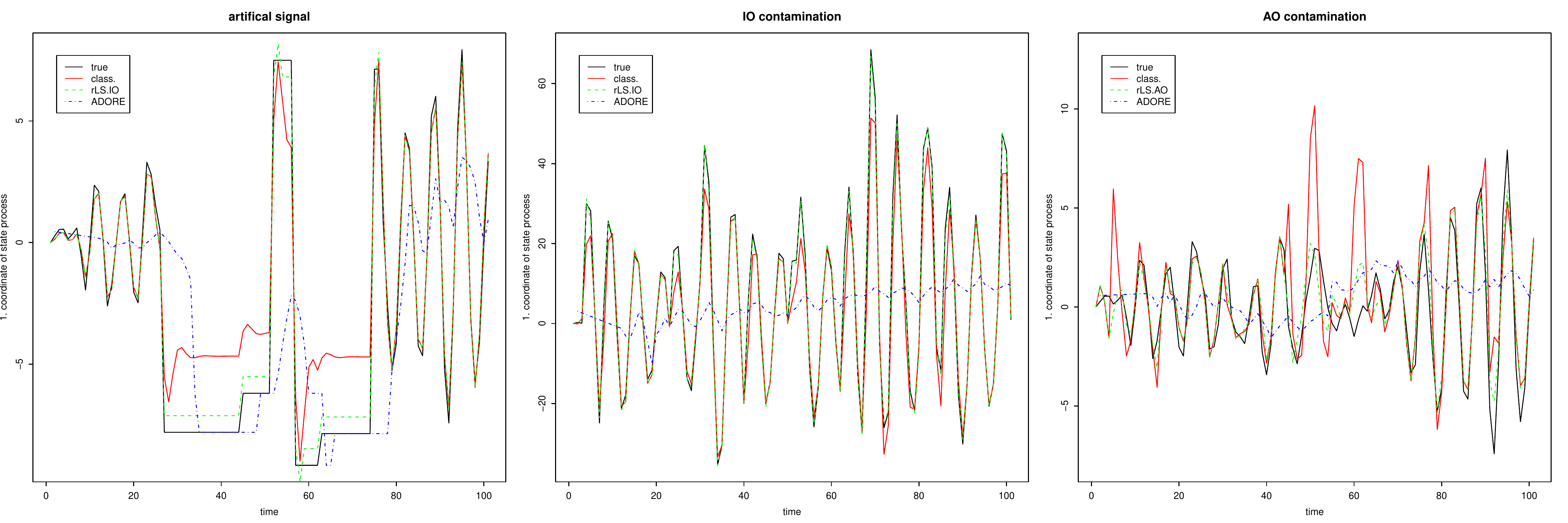}
\caption{Results of the simulated state-space model for different
situations: left contamination by artifical signal, middle IO
contamination, right AO contamination}
\label{exampleTrendEKF}
\end{figure}

In Figure~\ref{exampleTrendEKF}, the realizations of the state process, 
together with their filter estimates are displayed for different situations
of contamination. The black line is again the true state process, while 
classical Kalman, rLS, and ADORE filters are plotted by a red line, a  
dashed green line, and a dot-dashed blue line, respectively.

\paragraph*{Endogenous contamination by artificial signal:}

In this case, whole parts of the state
process are substituted by an artificial signal,
more specifically by a so called block signal \citep[cf., e.g.,][]{Do:Jo:94}
which is piecewise constant with random length and amplitude.
Here, the rLS.IO filter is able to track the state process best. 
The classical Kalman filter, as already seen in the previous section, 
is not able to follow the level shifts. Using the ADORE filter as 
non-parametric filtering method an obvious time delay in following 
the state signal can be seen.

\paragraph*{IO contamination:}

Here, the rLS.IO filter is able to follow the
true state, whereas the classical Kalman filter fails to track the spikes of
the state signal. The ADORE filter as non-parametric alternative only fits
an overall trend.
In general, non-parametric filters as ADORE are specialized to reveal trends,
trend changes or shifts of an underlying, possibly non-stationary signal in the
presence of outliers and, according to our experience, they extremely smooth the
underlying process.

\paragraph*{Spiky outliers:}

The rLS.AO filter is not affected by AOs whereas the classical Kalman filter 
is prone to them. The ADORE filter again shows the above mentioned 
properties and estimates more or less an overall trend of the 
underlying AR(2) process.

%-----------------------------------------------------------------------------
\subsection{Coping with non observed aspects}\label{Sec:NonObsAsp}
%-----------------------------------------------------------------------------
One important aspect of reconstructing states in SSMs is observability,
compare \citet[App.~C]{A:M:90}, as in general matrix $Z_t$ will have a 
non-trivial null space with the consequence that state signals falling
to this null space are not visible at filtering time. Smoothing may
to some extent relieve this problem with a certain time delay, when
subsequent transitions $F_s$ move the states in such a way
that they become visible to $Z_s$ at a later stage.

We illustrate this problem studying how the considered
versions of the Kalman filter cope with such non observed aspects
in the following setup: %.
\begin{equation} \label{Eq:SimSect}
\begin{array}{llll}
T=50 ~, & F=\left(
  \begin{array}{ccc}
    1 & 1 & 0 \\
    0 & 1 & 1 \\
    0 & 0 & 0 \\
     \end{array}
\right) ~, & Z=\left(
  \begin{array}{ccc}
    1 & 0 & 0 \\
    0 & 0 & 1 \\
     \end{array}
\right) ~, &
Q = \textrm{diag}(0,0,0.001) ~,\\
& V = \textrm{diag}(0.1,0.001) ~, & a_{0}=(0, 0, 0)' ~, &
Q_0 = \textrm{diag}(1,0.1,0.001) ~.
\end{array}
\end{equation}
Here, we use as outlier specification the ones from Section~\ref{devidmod}
with $r_{\rm\SSs IO}=r_{\rm\SSs AO}=0.1$, i.e., \eqref{YSO} and \eqref{indep2} in
the AO case and \eqref{IOSO} in the IO case.
As contaminating distributions we chose
$X_t^{\rm\SSs di}\sim {\rm multiv.Cauchy}(0,Q)$
and $Y_t^{\rm\SSs di}\sim {\rm Cauchy}/1000$ (using {\sf R} packages
{\tt mvtnorm} \citep{mvtnorm} and {\tt MASS} \citep{V:R:02}).

\begin{figure}[tp]
\centering
\includegraphics[width=\textwidth]{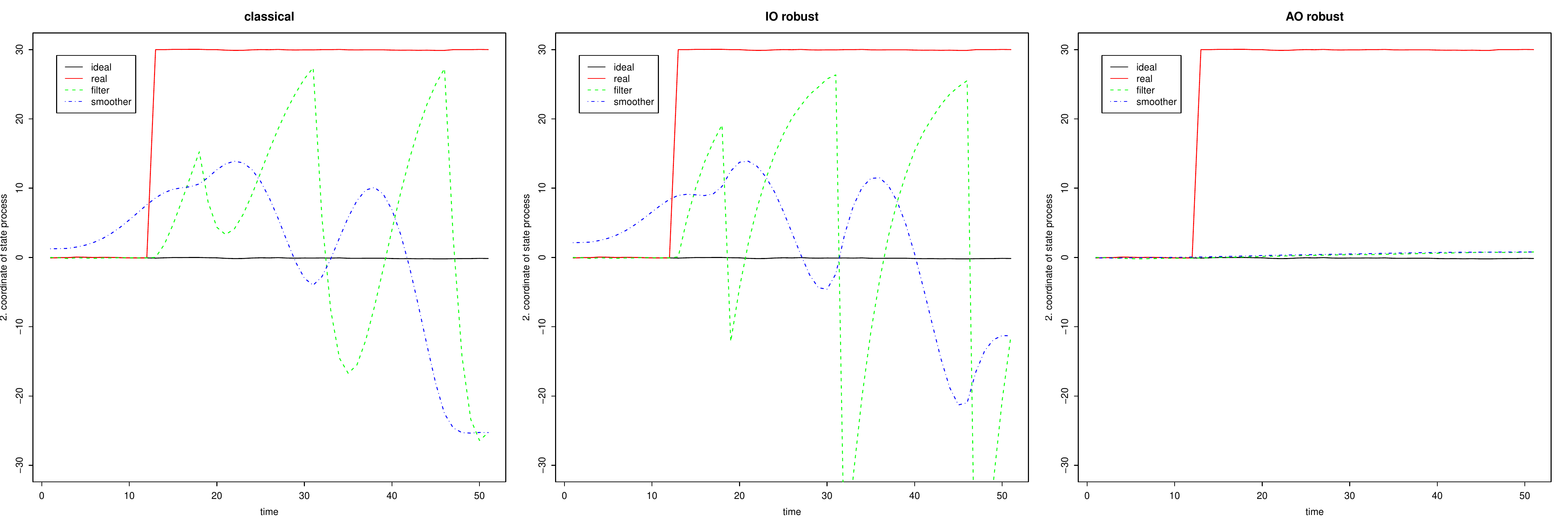}
\caption{Filter estimates of the simulated state-space model using  different
filter and smoothers: left classical Kalman filter and smoother, middle
IO-robust filter, right AO-robust filter}
\label{exampleRankEKF}
\end{figure}

In Figure~\ref{exampleRankEKF},  for different
versions of the rLS, we display one realization of the state process in the ideal and in the
real IO-contaminated situation
together with the filter and smoother estimates. 
The black line is
the true state process in the ideal situation, while the red line
represents the IO-contaminated state process, i.e., the real situation.
The dashed green line
represents the rLS filter, the dot-dashed blue line the corresponding
rLS smoother.

Only the second coordinate, which lies in  $\ker \boma{Z}_t$,
of the state process is plotted.

All three plots clearly reveal that none of the proposed filters is
able to cope with this situation, i.e., to correctly follow the level
shift in the second coordinate of the state process caused by an
IO-contamination.

%-----------------------------------------------------------------------------
\section{Application} \label{ApplSect}
%-----------------------------------------------------------------------------
In this section, we show an application of the procedures described above
to data from a cooperation with the department for Mathematical Methods in
Dynamics and Durability at Fraunhofer ITWM and Kaiserslautern University,
in particular with Nikolaus Ruf and J\"{u}rgen Franke.
This data is part of a larger project they are involved.
In the application, we deal with data taken from a vehicle moving on some track, which consists
of four data channels, i.e.,
\begin{itemize}
\setlength{\partopsep}{-5ex}
\setlength{\topsep}{-5ex}
\setlength{\itemsep}{-.3ex}
  \item time recorded in seconds ($[s]$);
  \item vehicle speed $\widetilde{sp_{t}}$, measured
  in meters per second ($[m/s]$);
  \item measured altitude $\widetilde{h}_{t}$ ($[m]$);
  \item pitch angle speed $\widetilde{\dot{\alpha}}_{t}$,
  measured in radians per second ($[rad/s]$).
\end{itemize}

We are interested in slope estimation, more precisely in the change of altitude
over distance, because this information cannot be captured so easily from
cartographic information.

The original data has been distorted by different factors, e.g.\
GPS measurement error, discreteness of the measurement process,
 change of the road surface, etc. If during the measurement the process signal
was lost for short time because of tunnels on the way or for some other
reasons, some part of the data is missing and we can observe e.g.\ sudden jumps
of the altitude or speed, what is impossible in practice
for the tracks. Such situations are very common in reality, therefore we are
interested in constructing reliable parametric models of such errors in order
to reconstruct the data.

With different simplifying assumptions
on the observed data---to some degree for illustration purpose---we construct
three models:
\begin{enumerate}
  \item[(M1)] {\bf A linear time-invariant model} with the following state vector and matrix:
\begin{equation*}
X_{t}=\left(
  \begin{array}{ccc}
    h_{t} \\
    \dot{h}_{t} \\
    c_{t} \\
     \end{array}
\right) \quad
F=\left(
  \begin{array}{ccc}
    1 & 1 & 0 \\
    0 & 1 & 1 \\
    0 & 0 & 0 \\
     \end{array}
\right) \quad
v_{t}\sim N_{3}(0,Q),
\quad
Q=\textrm{diag}(0,0,0.01),
\end{equation*}
where $c_{t}=sp_{t+1}\dot{\alpha}_{t},t=0,...,T-1$ denotes the compound increment
measured in ($[m/s]\times[rad/s]$). Note that by construction, $c_{T}$ is not defined,
therefore we additionally assume that $c_{T}=sp_{T}\dot{\alpha}_{T}$.

The hyper-parameters of the observation equation in this model are the following
\begin{equation*}
Y_{t}=\left(
  \begin{array}{cc}
    \widetilde{h}_{t} \\
    \widetilde{c}_{t} \\
     \end{array}
\right) \quad
Z=\left(
  \begin{array}{ccc}
    1 & 0 & 0 \\
    0 & 0 & 1 \\
     \end{array}
\right) \quad
\varepsilon_{t}\sim N_{2}(0,V),
\quad V=\textrm{diag}(5,0.01).
\end{equation*}
In reality, the variances have been obtained by an application of
 an EM-type algorithm as the one by \citet{Sh:St:82}.
 We mention that
this EM-Algorithm is all but robust, being based on classical estimators
for first and second  moments. This robustness issue will be discussed  elsewhere
in more detail.

 In our application, though, we inspected the norms of the observation residuals $\Delta Y_t$
and the estimated  innovations $\hat v_t$ and errors $\hat \varepsilon_t$ which were obtained from the
 rLS-filters and -smoothers applied to the real data.
 Among these obtained $\|\Delta Y_t\|$, $\|\hat v_t\|$, $\|\hat \varepsilon_t\|$, we did not observe
 outstanding values, which indicates that application of the classical
 EM-Algorithm in this case should be justified.

The initial distribution of the state vector here is defined as follows
\begin{equation*}
a_{0}=(h_{1}, 0, 0)', \quad Q_{0}=\textrm{diag}(5,1,0.01).
\end{equation*}

In Figure~\ref{plot31}, we plot the actual observations together with the
respective reconstruction by means of the classical Kalman filter, the rLS.AO,
and the rLS.IO in Models~(M1) and (M2) defined below, with
a zoom-out of observations 100--130 to better distinguish the different models.
The classical reconstruction is not clearly visible at the plot, since
in case of these data, it almost coincides with the rLS.AO.

\begin{figure}[h!]
\center{\includegraphics[width=0.8\linewidth]{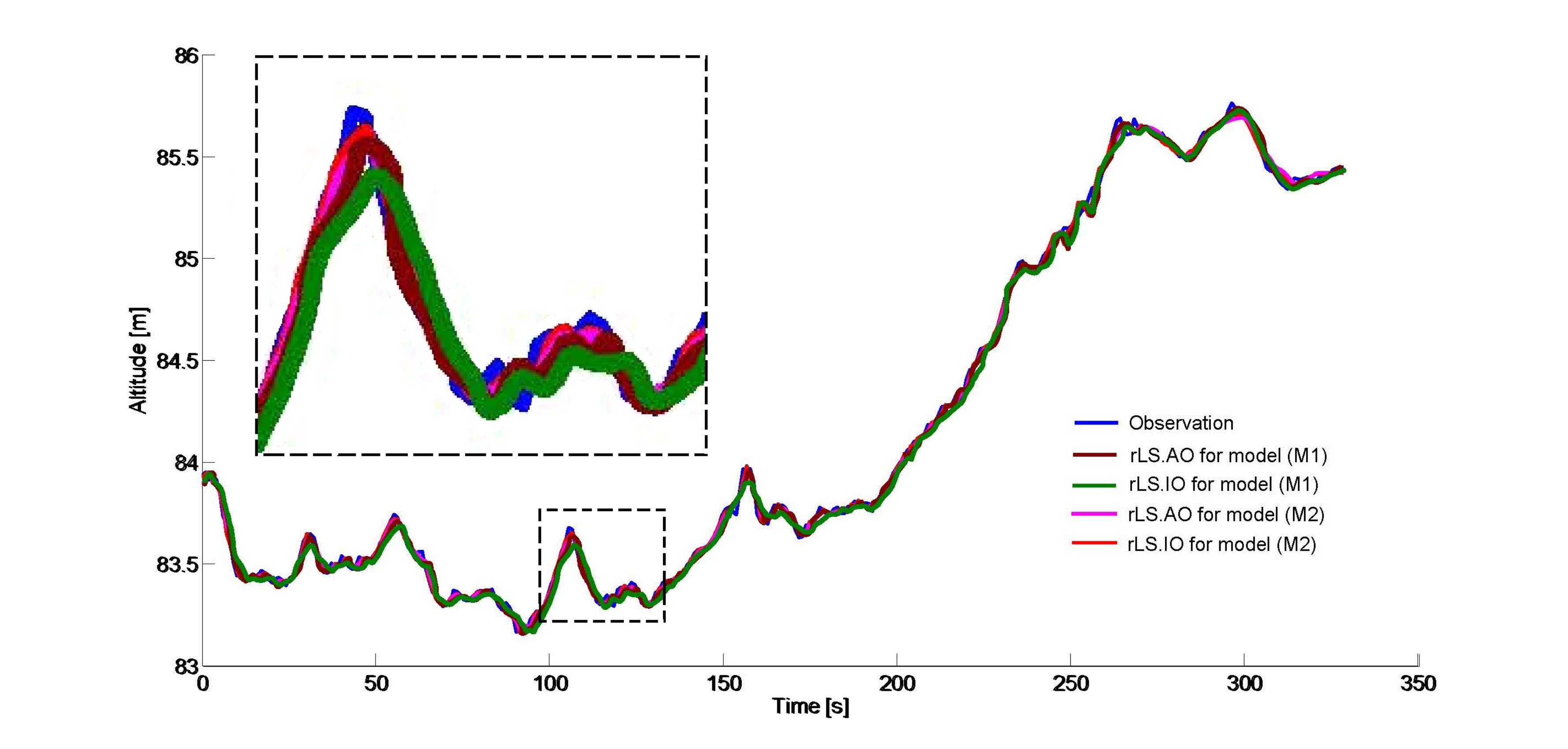}}
\caption{Observations and reconstructed values for different filters at Models~(M1) and (M2)}
\label{plot31}
\end{figure}

Other data sets we analyzed showed much stronger evidence for outliers,
but for confidentiality reasons, we limit ourselves to this data set and
instead discuss the behavior of our procedures at outliers in generated data.
We may mention though, that our procedures work well in the discussed
problems.

  \item[(M2)] {\bf A linear time-varying model} with the following state hyper-parameters:
\begin{equation*}
X_{t}=\left(
  \begin{array}{ccc}
    h_{t} \\
    \alpha_{t} \\
    \dot{\alpha}_{t} \\
     \end{array}
\right) \quad
F_{t}=\left(
  \begin{array}{ccc}
    1 & sp_{t}\Delta t & 0 \\
    0 & 1 & \Delta t\\
    0 & 0 & 0 \\
     \end{array}
\right) \quad
v_{t}\sim N_{3}(0,Q),
\quad
Q=\textrm{diag}(0,0,0.05).
\end{equation*}
Note that to complete the derivation of the model, we need to define a
state matrix for $t=0$, and since we do not have the vehicle
speed at time $t=0$ we set it to the first observation
$sp_{1}$, i.e.
\begin{equation*}
F_{0}=\left(
  \begin{array}{ccc}
    1 & sp_{1}\Delta t & 0 \\
    0 & 1 & \Delta t\\
    0 & 0 & 0 \\
     \end{array}
\right).
\end{equation*}

The hyper-parameters of the observation equation in this model are the following:
\begin{equation*}
Y_{k}=\left(
  \begin{array}{cc}
    \widetilde{h}_{k} \\
    \widetilde{\dot{\alpha}}_{k} \\
     \end{array}
\right) \quad
Z=\left(
  \begin{array}{ccc}
    1 & 0 & 0 \\
    0 & 0 & 1 \\
     \end{array}
\right) \quad
\varepsilon_{t}\sim N_{2}(0,V),
\quad V=\textrm{diag}(5,0.005).
\end{equation*}

The initial distribution of the state vector is  again defined as follows
\begin{equation*}
a_{0}=(h_{1}, 0, 0)', \quad Q_{0}=\textrm{diag}(5,0.005,0.005).
\end{equation*}

  \item[(M3)] {\bf A quadratic time-invariant model} accounting for acceleration.
  This gives a nonlinear SSM in the notation of Section~\ref{EKF},
  specified as
\begin{equation*}
f_t(X_{t-1},u_t,v_t)=A(X_{t-1}\otimes X_{t-1})+BX_{t-1}+v_{t},
\end{equation*}
\begin{equation*}
z_t(X_t,w_t,\varepsilon_t)=ZX_{t}+\varepsilon_{t}
\end{equation*}
Here $\otimes$ denotes the Kronecker product, i.e.,
the quadratic term can be written in the following form
\begin{equation*}
A(X_{t}\otimes X_{t})=\sum_{l=1}^{p}A^{l}[X_{t}]_{l}X_{t},
\quad k=0,..,T-1
\end{equation*}
with known $p\times p^{2}$-matrix
$A=\left(A^{1}\mid ... \mid A^{p} \right)$.

The hyper-parameters, states and observations of constructed for this application
quadratic time-invariant model are the following
\begin{equation*}
X_{t}=\left(
  \begin{array}{ccccc}
    h_{t} \\
    sp_{t} \\
    \dot{sp}_{t} \\
    \alpha_{t} \\
    \dot{\alpha}_{t} \\
     \end{array}
\right) \quad
A=\left(0_{5\times5}\Big|
  \begin{array}{ccccc}
    0 & 0 & 0 & \Delta t & 0 \\
    0 & 0 & 0 & 0 & 0\\
    0 & 0 & 0 & 0 & 0\\
    0 & 0 & 0 & 0 & 0\\
    0 & 0 & 0 & 0 & 0\\
    \end{array}
    \Big| 0_{5\times5}\Big| 0_{5\times5}\Big| 0_{5\times5}
    \right),
\end{equation*}

\begin{equation*}
B=\left(
  \begin{array}{ccccc}
    1 & 0 & 0 & 0 & 0\\
    0 & 1 & \Delta t & 0 & 0\\
    0 & 0 & 0 & 0 & 0\\
    0 & 0 & 0 & 1 & \Delta t\\
    0 & 0 & 0 & 0 & 0\\
    \end{array}
    \right) \quad
v_{t}\sim N_{5}(0,Q),
\quad
Q=\textrm{diag}(0,0,2,0,0.005),
\end{equation*}

\begin{equation*}
Y_{t}=\left(
  \begin{array}{cccc}
    \widetilde{h}_{t} \\
    \widetilde{sp}_{t} \\
    \widetilde{\dot{sp}}_{t} \\
    \widetilde{\dot{\alpha}}_{t} \\
     \end{array}
\right) \quad
Z=\left(
  \begin{array}{ccccc}
    1 & 0 & 0 & 0 & 0\\
    0 & 1 & 0 & 0 & 0\\
    0 & 0 & 1 & 0 & 0\\
    0 & 0 & 0 & 0 & 1\\
     \end{array}
\right) \quad
\varepsilon_{t}\sim N_{4}(0,V),
\quad V=\textrm{diag}(5,2,2,0.005).
\end{equation*}

The initial distribution of the state vector here is defined as follows
\begin{equation*}
a_{0}=(h_{1}, sp_{1},0, 0, 0)', \quad Q_{0}=\textrm{diag}(5,2,2,0.005,0.005).
\end{equation*}
\end{enumerate}

%-----------------------------------------------------------------------------
\section{Simulation} \label{SimSect} % Daria+Bernhard
%-----------------------------------------------------------------------------
To see how our procedures (and some competitors) work in the outlier setting
they are constructed for, we produce an simulation study done
in {\sf R}, \citet{RMANUAL}, in the framework of our package {\tt robKalman} developed
at \url{http://r-forge.r-project.org/projects/robkalman/}.

For these simulations, we generated 10000 runs of data for two different
models: Model~(SimA), a one-dimensional time-invariant steady-state model
with parameters $F=Z=Q=V=1$, $a_0=1$ and time horizon $T=50$ for comparison with
the median-based filters of Section~\ref{hybridFsf}. More specifically,
we use function {\tt hybrid.filter} from {\sf R} package {\tt robfilter},
with specifications {\tt PRMH} and {\tt MMH} denoted by {\tt hybf} 
and {\tt hybs} below, respectively (for {\it hyb}rid {\it f}ilter / {\it s}moother).
Both procedures are used with fixed window width $5$ and minimum
number of non-missing observations $2$, reflecting the actual outlier
situation.

Second, we study Model~(SimB) which takes over dimensions and typical parameters from
the application of Section~\ref{ApplSect} as specified
in detail in \eqref{Eq:SimSect} and already used in Section~\ref{Sec:NonObsAsp}.

As outlier specification, in both models we use the ones from Section~\ref{devidmod}
with $r_{\rm\SSs IO}=r_{\rm\SSs AO}=0.1$, i.e., \eqref{YSO} and \eqref{indep2} in
the AO case and \eqref{IOSO} in the IO case.
As contaminating distributions we chose
${\rm Cauchy}$ for $X_t^{\rm\SSs di}\sim {\rm Cauchy}(-10,1)$
and $Y_t^{\rm\SSs di}\sim {\rm Cauchy}(5,1)$ in Model~(SimA),
while in Model~(SimB) we used  $X_t^{\rm\SSs di}\sim {\rm multiv.Cauchy}(0,Q)$
and $Y_t^{\rm\SSs di}\sim {\rm Cauchy}(0,1/1000)$ (using {\sf R} packages
{\tt mvtnorm} \citep{mvtnorm} and {\tt MASS} \citep{V:R:02}).

The results for Model~(SimA) are summarized in Table~\ref{TabSimA}
and Figures~\ref{figSimA1}--\ref{figSimA4}, the ones for
Model~(SimB) in Table~\ref{TabSimB}
and Figures~\ref{figSimB1}--\ref{figSimB4}. In the annotation
we denote the smoother versions of rLS.AO and rLS.IO by
SrLS.AO and SrLS.IO, respectively.

In the tables, we display the empirical mean squared error (MSE) for
each of the procedures at each situation. It is clearly
visible that each filter does the job well it is made for:
The classical Kalman filter is best in the ideal situation with rLS.IO
and (a little less so) rLS.AO still close by. In the AO situation,
the AO-robust filter is best (also compared to the median-type filters
which comes second best), whereas classical Kalman, and, even worse rLS.IO, have problems.
In the IO situation the situation changes dramatically: rLS.IO excels,
whereas the classical Kalman filter shows its inertia, and, much worse,
rLS.AO and hybf are not at all able to track these abrupt signal changes.
For smoothing the situation is a bit different: At large we have the same
picture as for filtering, however the IO-robust smoother is not doing
a good job at all: the filter is much better here. This remains to be
studied in more detail. In the ideal and AO situation however the smoothers
do improve the filter (as they should). So it seems a strictly recursive
smoother like the one we propose is well capable to deal with spiky outliers
but much less so to track abrupt changes.

As to the graphics, in Figures~\ref{figSimA1} and \ref{figSimB1},
we display the coordinatewise distribution of the reconstruction errors
$\Delta X_{35}=X_{35|35}-X_{35}$ for filtering and $\Delta X_{35}=X_{35|50}-X_{35}$
for smoothing.  The columns of the panels are the situations (ideal, AO, and IO),
the rows (for Model~(SimB)) the state coordinates. In each panel, we display
the filters first and then the smoothers, ordered as in the columns of the tables.
In order to keep the boxes of the boxplots distinguishable, we skipped all outliers
outside $(-15,15)$ in Model~(SimA) and 8 times the coordinate-wise maximal interquartile
range in Model~(SimB). In addition to the tables of the MSE, these figures reveal
that also in terms of the bulk of the data there are differences among the
procedures:
In Figure~\ref{figSimB1} for Model~(SimB), coordinate~1 is hardest to reconstruct,
and shows large differences between smoother and filter. At least for
coordinates 1 and 2, the central boxes are definitively smallest for the rLS.AO filter
and its smoother in the AO situation, with even a large improvement by the smoother.
The same goes for the rLS.IO filter in the IO situation, whereas as already noted in the tables,
the IO smoother is less convincing in the IO situation, although not really worse
than the filter in terms of the central box, although coordinate 3 tells a different
story here.

In Figure~\ref{figSimA1} for Model~(SimA), we scaled all panels to the same coordinates in order
to be better able to compare the situations; otherwise the conclusions to be drawn
parallel the ones for Model~(SimA), except that in positions 4 an 8 in each panel
we display the median type filters and smoother which lead to largest central boxes
in the filters, and in the smoothers are in between the specialized robust smoother and
the respective unsuitably robustified smoother.

The remaining figures display the distribution of the normed reconstruction
errors, where in spite of Proposition~\ref{PropA6} below, we have not changed the norm
in the IO situation. To visualize all outliers this time, we use a logarithmic
scale for the $y$-axis which also emphasizes ``inliers'' (compare \citet[p.~140]{Ha:Ro..:86}),
i.e., situations where the procedures behave extra-ordinarily well. Per se, for reconstruction, this
is of course a beneficiary situation, but, in case inference is also of interest,
this will lead to underestimation of the true variation.

In Model~(SimA) in the ideal and IO situation, the median-type filters and smoothers
excel in this direction, but also the rLS filters both perform better than
the classical filter in this respect. For the smoother, the classical one
is best, though. To the upper tail, the norms even for the worst situations
surpasses the central box only by little (on log-scale, though). In the AO
situation the specialized robust filters and smoothers have the shortest tail
but as to the boxes are hardly distinguishable from the classical ones, while
in the IO situation only the rLS.IO has a convincing tail.

In Model~(SimB), in the ideal situation, the
smoothers are clearly better than the filters but much less so in the outlier
situations (although the AO smoother is significantly better than the filter).
We also see that the classical procedures produce more inliers than the robustified
ones in the ideal and IO situation, while in the AO situation no clear statement
can be made.

\begin{table}[!ht]
\begin{tabular}{l|rrrr|rrrr}
              situation&\multicolumn{4}{c|}{filter}&\multicolumn{4}{c}{smoother}\\
         &Kalman   &rLS.IO   &rLS.AO  & hybf    &  Kalman   &rLS.IO  &rLS.AO &hybs\\
\hline
id & 0.628 & 0.679&    0.808&   2.180&  0.447&   0.475&    0.575&   0.939\\
AO &24.206 &30.379&    1.446&   4.950& 14.087&  14.226&    1.097&   1.475\\
IO &30.175 & 0.729& 1850.977& 629.129& 66.538&  95.579& 1846.870& 623.371
\end{tabular}
\caption{empirical MSEs in Model~(SimA) at $t=35$ for $T=50$}\label{TabSimA}
\end{table}

\begin{figure}[!ht]
\centerline{\includegraphics[width=0.8\linewidth]{"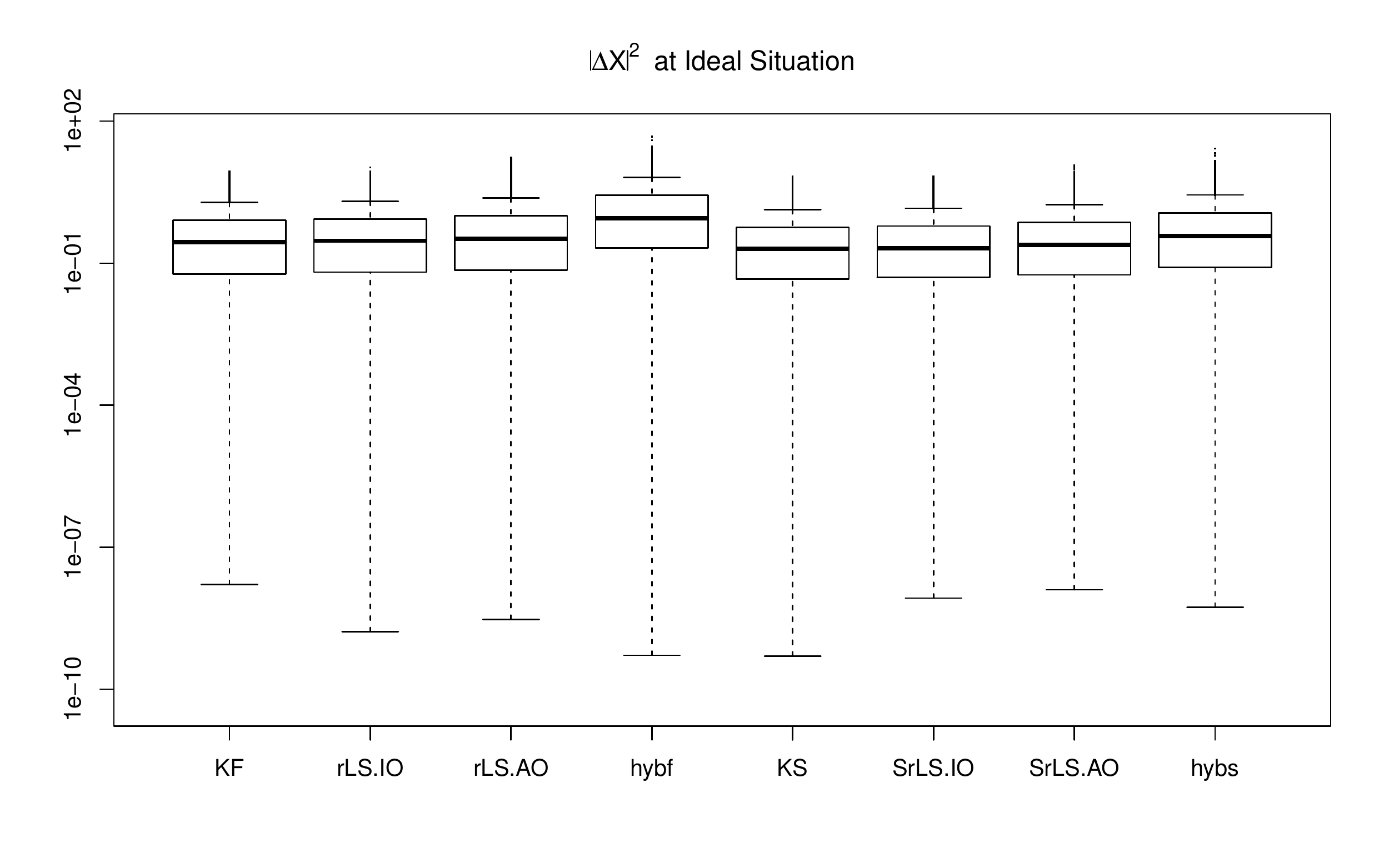}}
\caption{$\|\Delta X_t\|^2$ in Model~(SimA) at $t=35$ for $T=50$ in ideal situation; filtered (boxes 1--4) and smoothed ($T=50$; boxes 5--8) versions}\label{figSimA1}
\end{figure}
\begin{figure}[!ht]
\centerline{\includegraphics[width=0.8\linewidth]{"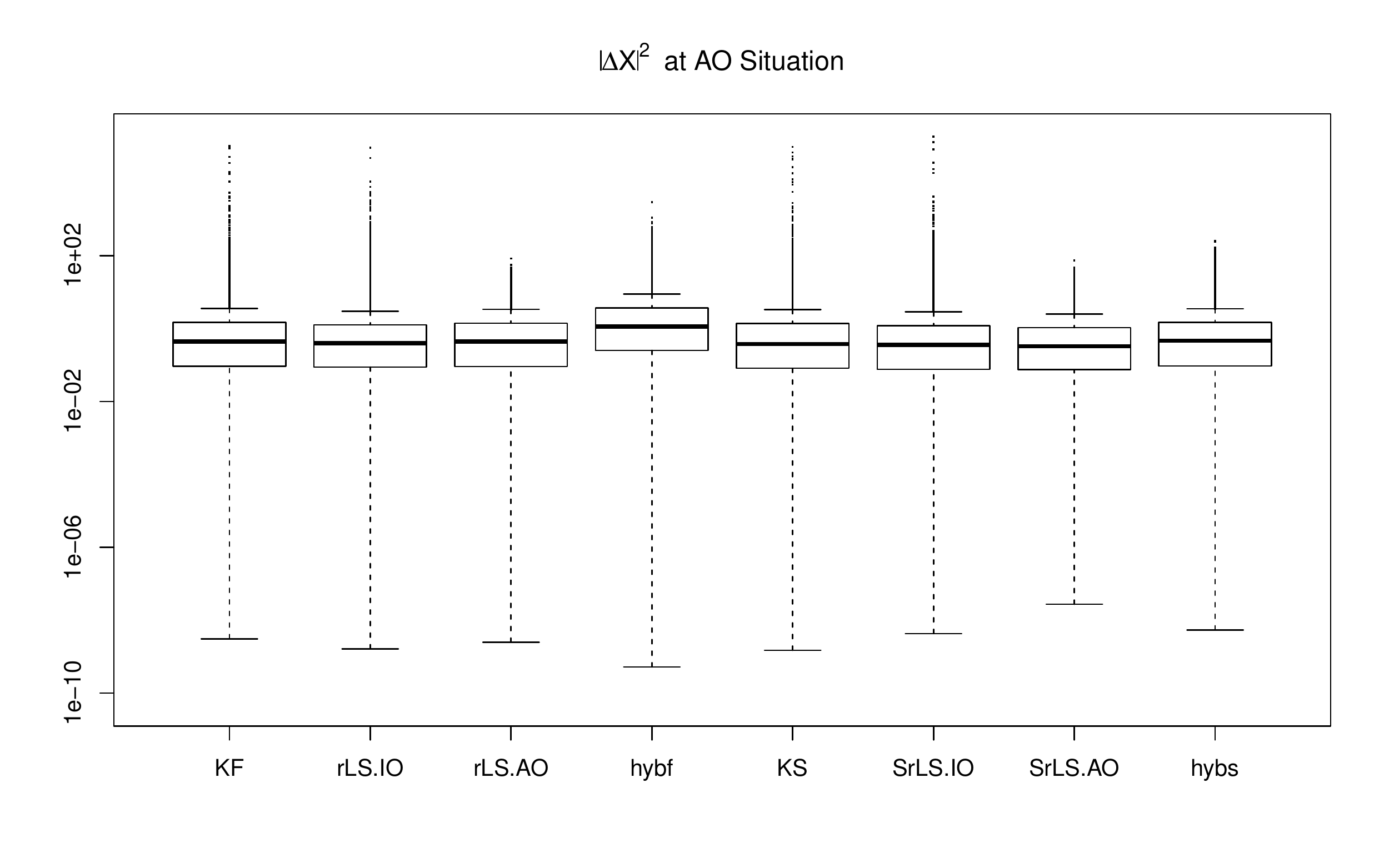}}
\caption{$\|\Delta X_t\|^2$ in Model~(SimA) at $t=35$ for $T=50$ in AO situation; filtered (boxes 1--4) and smoothed ($T=50$; boxes 5--8) versions}\label{figSimA2}
\end{figure}
\begin{figure}[!ht]
\centerline{\includegraphics[width=0.8\linewidth]{"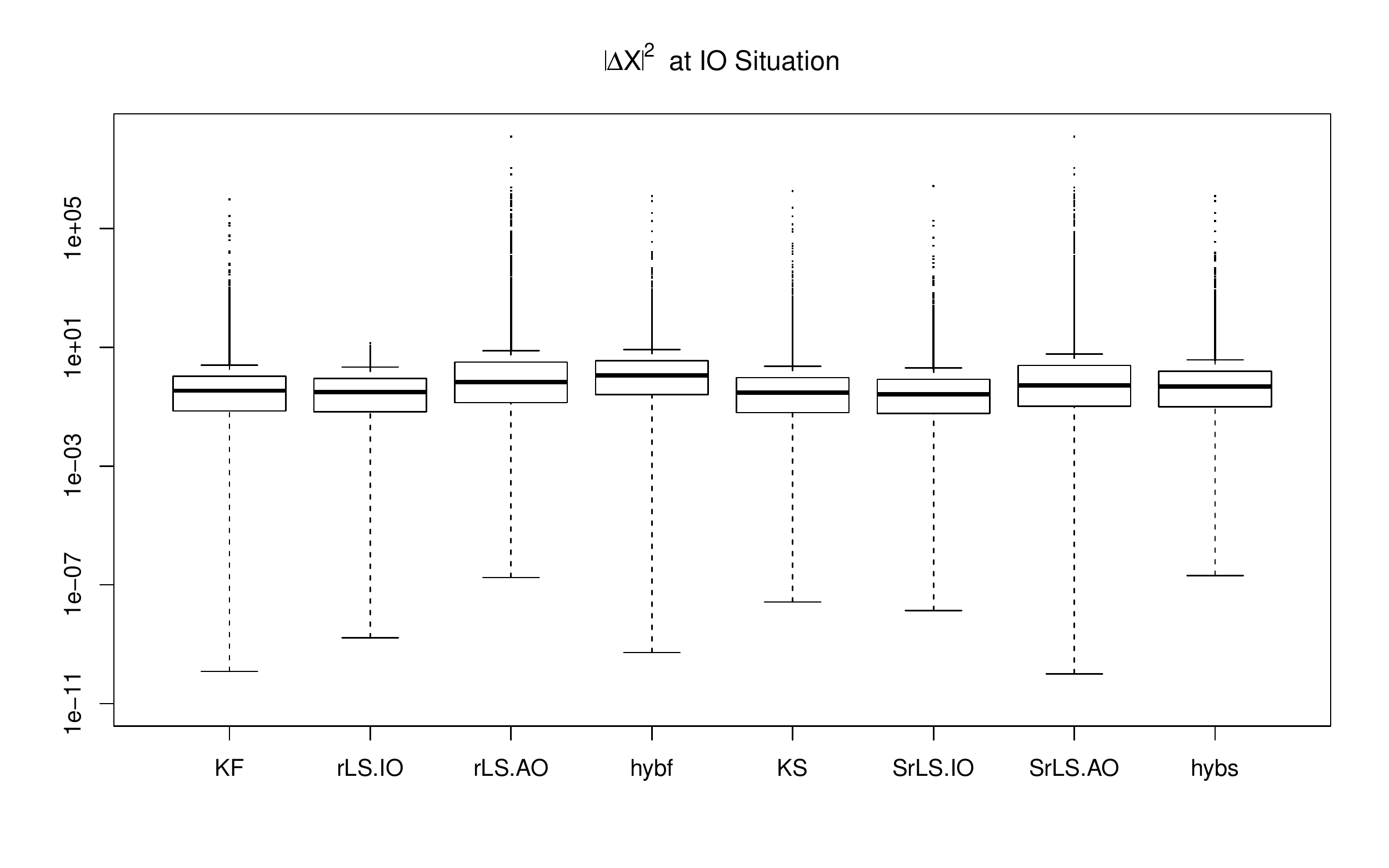}}
\caption{$\|\Delta X_t\|^2$ in Model~(SimA) at $t=35$ for $T=50$ in IO situation; filtered (boxes 1--4) and smoothed ($T=50$; boxes 5--8) versions}\label{figSimA3}
\end{figure}

\begin{figure}[!ht]
\centerline{\includegraphics[width=\linewidth]{"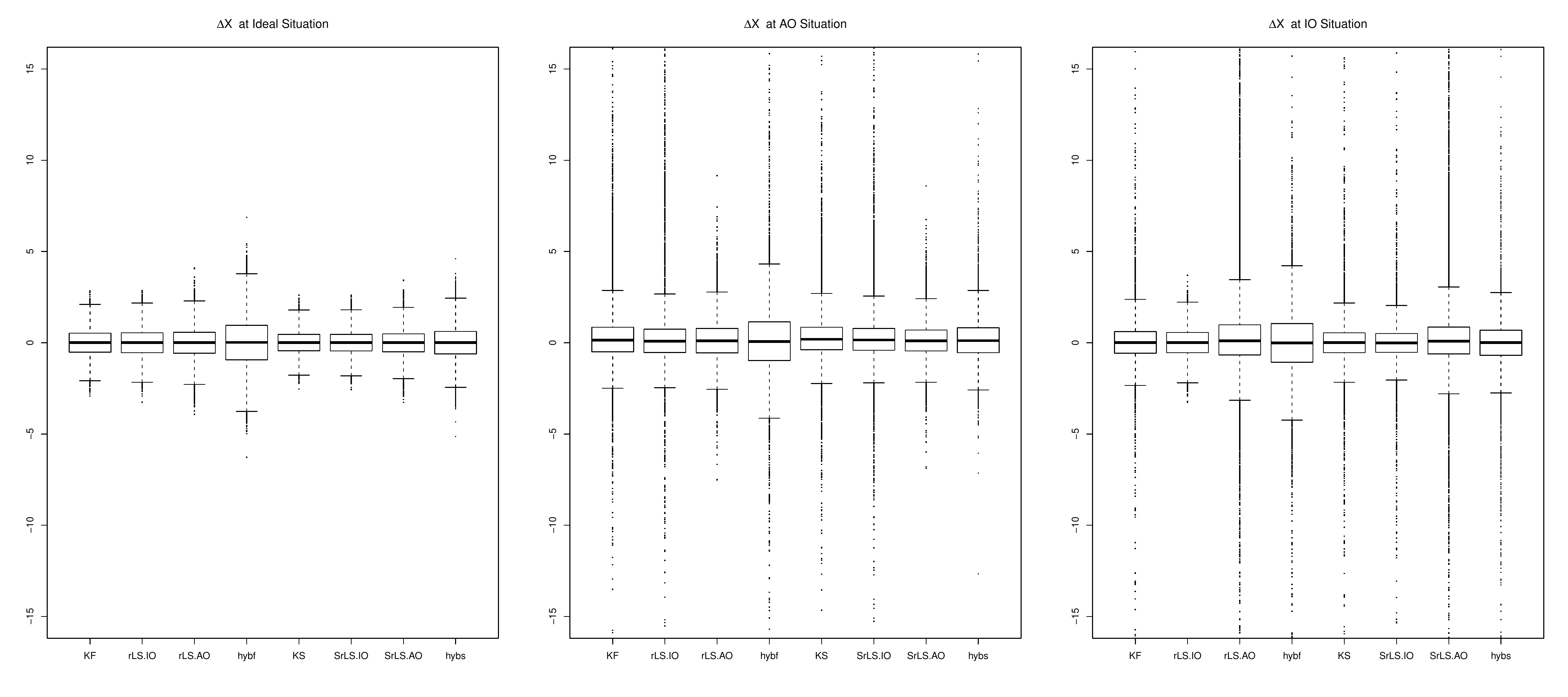}}
\caption{$\Delta X_t$ in Model~(SimA) at $t=35$ in all dimensions and in different situations; filtered (boxes 1--4) and smoothed ($T=50$; boxes 5--8) versions}\label{figSimA4}
\end{figure}

\begin{table}[!ht]
\begin{tabular}{l|rrr|rrr}
              situation&\multicolumn{3}{c|}{filter}&\multicolumn{3}{c}{smoother}\\
         &Kalman   &rLS.IO   &rLS.AO     &  Kalman   &rLS.IO  &rLS.AO\\
\hline
id &   0.034 &   0.047&    1.725&    0.011 &    0.013&    1.514\\
AO &1995.207 &7676.445&    5.661& 5520.957 &32270.199&    5.262\\
IO & 175.484 &   3.553& 7515.495&  116.626 &   86.502& 7513.354
\end{tabular}
\caption{empirical MSEs in Model~(SimB) at $t=35$ for $T=50$}\label{TabSimB}
\end{table}
% Implementation
\begin{figure}[!ht]
\centerline{\includegraphics[width=0.8\linewidth]{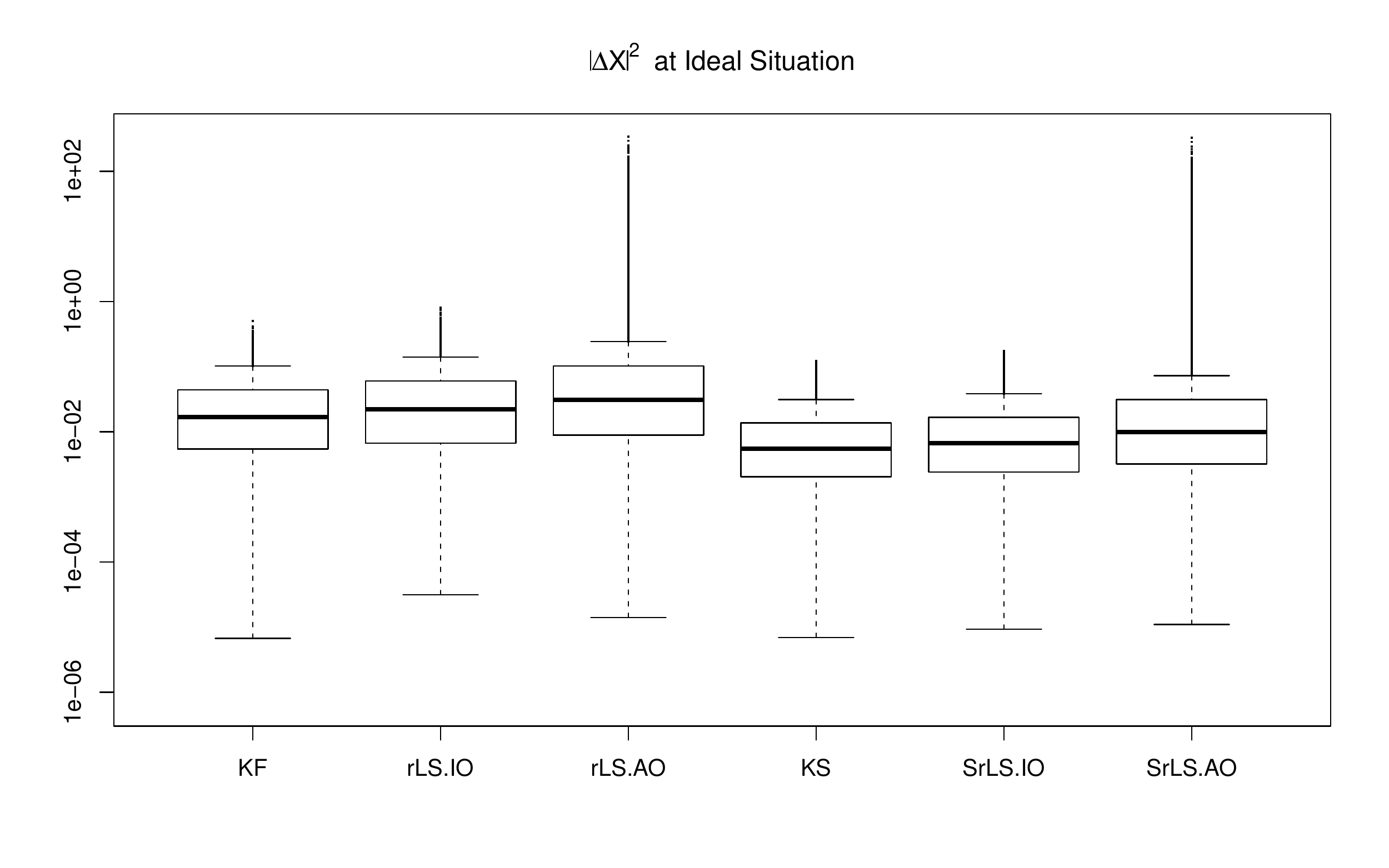}}
\caption{$\|\Delta X_t\|^2$ in Model~(SimB) at $t=35$ for $T=50$ in ideal situation; filtered (boxes 1--3) and smoothed ($T=50$; boxes 4--6) versions}\label{figSimB1}
\end{figure}
\begin{figure}[!ht]
\centerline{\includegraphics[width=0.8\linewidth]{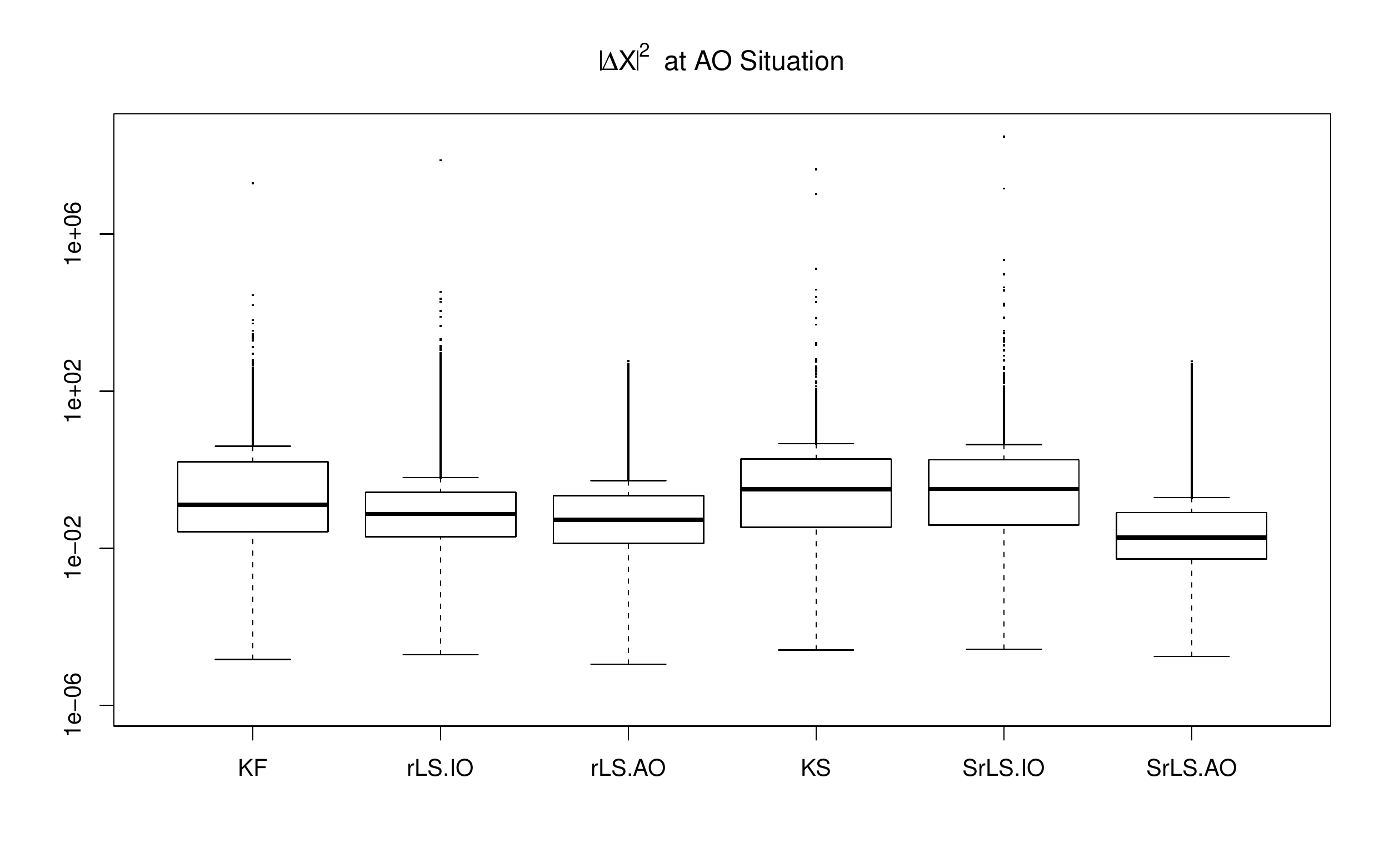}}
\caption{$\|\Delta X_t\|^2$ in Model~(SimB) at $t=35$ for $T=50$ in AO situation; filtered (boxes 1--3) and smoothed ($T=50$; boxes 4--6) versions}\label{figSimB2}
\end{figure}
\begin{figure}[!ht]
\centerline{\includegraphics[width=0.8\linewidth]{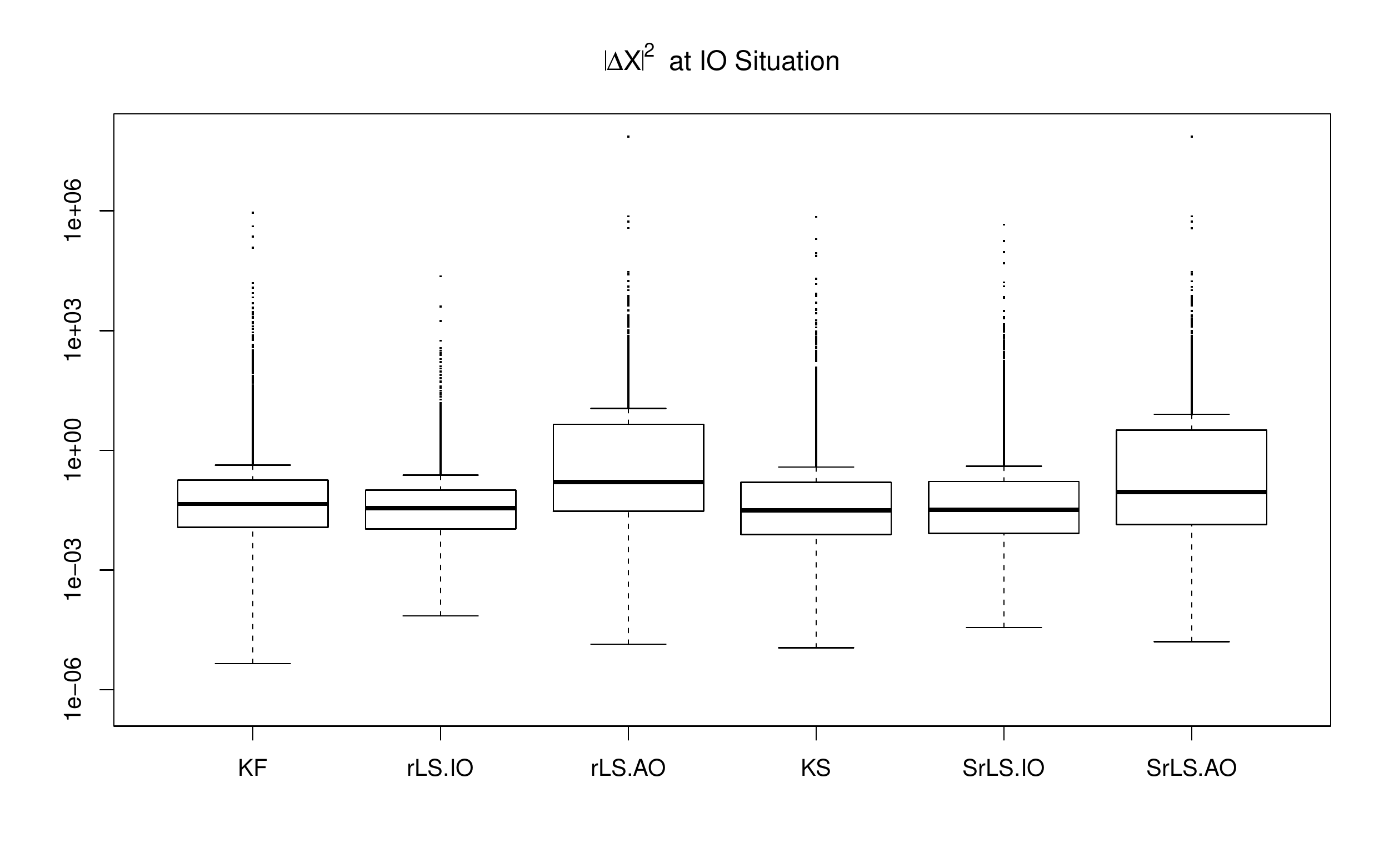}}
\caption{$\|\Delta X_t\|^2$ in Model~(SimB) at $t=35$ for $T=50$ in IO situation; filtered (boxes 1--3) and smoothed ($T=50$; boxes 4--6) versions}\label{figSimB3}
\end{figure}

\begin{figure}[!ht]
\centerline{\includegraphics[width=1.1\linewidth]{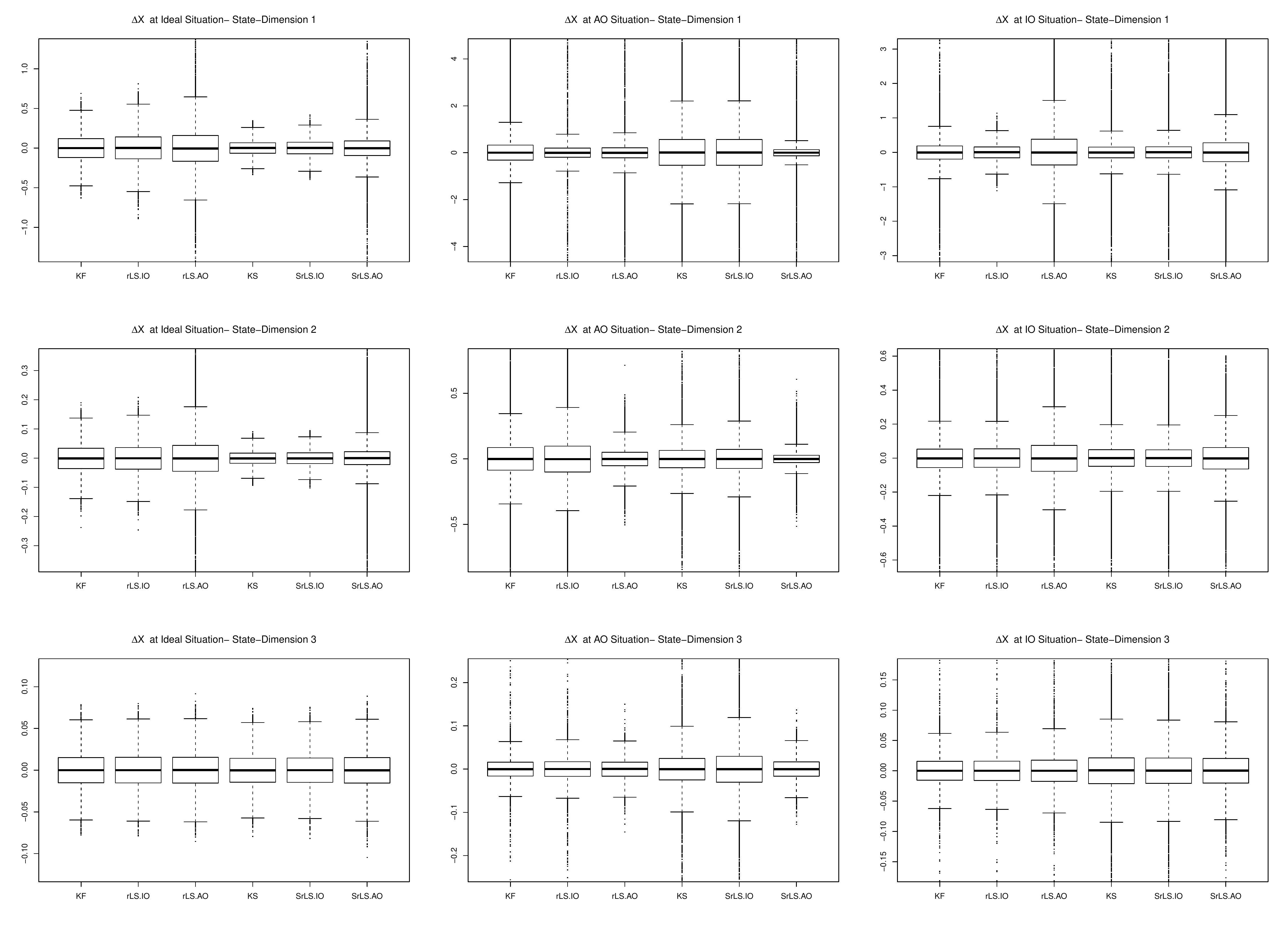}}
\caption{$\Delta X_t$ in Model~(SimB) at $t=35$ in all dimensions and in different situations; filtered (boxes 1--3) and smoothed ($T=50$; boxes 4--6) versions}\label{figSimB4}
\end{figure}

%-----------------------------------------------------------------------------
\section{Discussion and conclusion} \label{SumSect}
%-----------------------------------------------------------------------------
\paragraph{Contribution}
In this paper we have presented robustifications of the Kalman filter
and smoothers specialized either for damping of spiky outliers or
for faster tracking a deviated series where the smoothers and the
general IO-robust filter are new to the best of our knowledge.

All our procedures are recursive, hence extra-ordinarily
fast, so can be used in online problems and can easily be used
to also robustify the Extended Kalman filter for non-linear
state-space models. Our procedures are implemented to {\sf R},
developed under {\tt r-forge}, \url{http://r-forge.r-project.org/}
and soon will be submitted to {\tt CRAN}, \url{http://cran.r-project.org/}.

We have demonstrated the superior behavior of our procedures at the
outlier situations they are made for and, in a study of stylized
``realistic'' outlier situations we showed that they also can cover
with a wider variety of outlier situations. These stylized situations
also helped to identify certain flaws of the procedures.

In particular the new rLS.AO smoother and the rLS.IO filter seem
to be recommendable already as they are. As we have seen in the
evaluations, further research is needed though to improve
the IO-robust smoother which could not convince so far.

As to the theoretical contributions, for the tracking problem we have
thoroughly settled the case of non invertible observation matrices, i.e.;
the situation when certain directions of the state are not visible.
It turned out that the taken passage to observation space
in order to define the rLS.IO involves no extra costs in terms
of rank defects compared to the classical Kalman filter.
In order to have a well-posed problem though, as demonstrated
in Section~\ref{Stylized}, we need to pass to a semi-norm which
ignores directions, no filter can see.
With this modified criterion, we can establish our IO robust
filter as approximately optimal, where approximately means that
it would be optimal if the ideal conditional expectation were linear.
Now the conditional expectation is not exactly linear, but, as shown
empirically in specific examples, it is close to linear---at least in
a central region.

\paragraph{Outlook}
In reality ``pure'' IO or AO situations hardly occur. Hence we think
it is of high importance to thoroughly study hybrid versions
of our filters (and/or smoothers) combining the
two types of filters (IO and AO) we have discussed so far, to use
them in mixed situations. So far we have only checked a heuristics
based on the sequence of the normed observation residuals $\|\Delta Y_t\|$
in a rolling window. In situation where IOs and AOs are well separated
this already works decently, but the procedure easily gets confused
once in a window we have both IOs and AOs.

In non-linear state space models, the unscented Kalman filter,
compare \citet{W:vM:02} deserves a robustification, which could easily take up
the ideas of this paper.

Finally, robust filters and smoothers
are a key ingredient of the EM algorithm \citep{Sh:St:82} to estimate unknown
(hyper-) parameters with interesting application to the fitting
of stochastic differential equations to financial data.

%-----------------------------------------------------------------------------
\section*{Acknowledgements}
%-----------------------------------------------------------------------------
The authors thank two anonymous referees for their valuable and helpful comments.
Financial support from VW foundation in the framework of project Robust Risk
Estimation for D.~Pupashenko is gratefully acknowledged.
%-----------------------------------------------------------------------------
\small
\appendix
%-----------------------------------------------------------------------------
\section{Appendix}
%-----------------------------------------------------------------------------
\subsection{Optimality of the classical Kalman filter} \label{OptimClKF}
%-----------------------------------------------------------------------------
Optimality of the classical Kalman filter among all linear filters in
$L_2$-sense and, under normality of the error and innovation distributions,
among all measurable filters is a well-known fact, compare, e.g. \citet[Sec.~5.2]{A:M:90}.
As we will need some of the arguments later, let us complement this fact
by some generalization to arbitrary norms generated by a quadratic form
and by a thorough treatment of the case of singularities in the
covariances arising in the definition of the Kalman gain from \eqref{bet3}.
To do so, we take the orthogonal decomposition of the Hilbert into closed linear
subspaces as
${\rm lin}(Y_{1:(t-1)})\oplus {\rm lin}(\Delta Y_{t})$ as granted
and  for $X=X_t-X_{t|t-1}$ and $Y=\Delta Y_t$ as given in \eqref{bet3},
derive $\hat K_t$.
To this end, for any matrix $A$ let us denote by $A^-$ the
generalized inverse of $A$ with the defining properties
\begin{equation}
A^-AA^-=A^-,\quad AA^-A = A,\qquad A^-A = (A^-A)^\tau,\quad AA^- = (AA^-)^\tau
\end{equation}
and for $D$ a positive semi-definite symmetric matrix in $\R^{p\times p}$ and
for $x\in\R^p$ define the semi-norm generated by $D$ as $\|x\|^2_D:=x^\tau D^{-} x$.

\begin{Lem} \label{lem1}
Let $p,q \in \N$, $P$ some probability and $X\in L_2^p(P)$, $Y\in L^q_2(P)$,
$\Ew X=0$, $\Ew Y=0$, where for some $Z\in\R^{q\times p}$, and some
$\ve \in L_2^q(P)$ independent of $X$, $Y=ZX+\ve$. Let $D$ a positive semi-definite
symmetric matrix in $\R^{p\times p}$.
Then
\begin{equation}
\hat K = \Cov(X,Y) \Cov(Y)^-
\end{equation}
solves
\begin{equation} \label{optL2}
\Ew \|X-KY\|^2_D = \, \min{}!,\qquad K \in \R^{p\times q}
\end{equation}
$\hat K$ is unique up to addition of some $A \in \R^{p\times q}$ such that $A\Cov Y =0$
and some $B \in \R^{p\times q}$ such that $DB=0$.
If  $\hat K = D D^- \hat K$, $\hat K$ has smallest Frobenius norm among all solutions $K$ to
\eqref{optL2}.
\end{Lem}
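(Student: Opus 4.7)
The plan is the standard projection-theoretic proof of a weighted least-squares problem, with careful bookkeeping since both $D$ and $\Cov(Y)$ may be singular. First I would expand $\Ew\|X-KY\|_D^2=\tr(D^-\Sigma_K)$ with $\Sigma_K=\Cov(X)-K\Cov(Y,X)-\Cov(X,Y)K^\tau+K\Cov(Y)K^\tau$ and motivate the candidate $\hat K=\Cov(X,Y)\Cov(Y)^-$ via the normal equation $\Cov(X-\hat KY,Y)=0$. For the posited $\hat K$ to satisfy this, the range inclusion $\mathrm{col}\,\Cov(Y,X)\subseteq\mathrm{col}\,\Cov(Y)$ must hold; this is the one place where the structural assumption $Y=ZX+\ve$ with $\ve$ independent of $X$ enters: for $u\in\ker\Cov(Y)$, $0=u^\tau\Cov(Y)u=u^\tau Z\Cov(X)Z^\tau u+u^\tau\Cov(\ve)u$ forces both nonnegative summands to vanish, hence $Z^\tau u\in\ker\Cov(X)$ and so $\Cov(X,Y)u=\Cov(X)Z^\tau u=0$.

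With the range inclusion in hand, the decomposition $X-KY=(X-\hat KY)+(\hat K-K)Y$ becomes orthogonal in the weighted pairing $(u,v)\mapsto\Ew[u^\tau D^- v]$: the cross term collapses under trace cyclicity to a multiple of $\Cov(X-\hat KY,Y)=0$. This yields $\Ew\|X-KY\|_D^2=\Ew\|X-\hat KY\|_D^2+\Ew\|(\hat K-K)Y\|_D^2$, proving that $\hat K$ is a minimizer.

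For the uniqueness description I would rewrite the excess term as $\Ew\|(\hat K-K)Y\|_D^2=\|D^{-1/2}(\hat K-K)\Cov(Y)^{1/2}\|_F^2$ via symmetric Moore--Penrose square roots, set $P=DD^-$ and $Q=\Cov(Y)\Cov(Y)^-$ (both symmetric projections since $D$ and $\Cov(Y)$ are symmetric), and observe that this Frobenius norm vanishes iff $P(\hat K-K)Q=0$. The block decomposition $E=\hat K-K=PEQ+PE(\EM-Q)+(\EM-P)E$ then writes $\hat K-K$ as a sum of an ``$A$-term'' satisfying $A\Cov(Y)=0$ and a ``$B$-term'' satisfying $DB=0$, which is exactly the claimed solution set. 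Finally, under the standing assumption $\hat K=DD^-\hat K=P\hat K$, the definition of $\hat K$ also gives $\hat K=\hat K Q$; then cyclicity of the trace with $AQ=0$ and $PB=0$ (the latter from $DB=0$ via left multiplication by $D^-$) yields $\tr(\hat K^\tau(A+B))=0$, hence $\|\hat K+A+B\|_F^2=\|\hat K\|_F^2+\|A+B\|_F^2$, proving minimality of $\|\hat K\|_F$.

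The main obstacle is the range-inclusion step in the first paragraph: it is what allows one to replace honest inversion of $\Cov(Y)$ by a generalized inverse without losing the normal equation, and it is the only step where the specific additive-noise structure $Y=ZX+\ve$ is used. All remaining steps are essentially standard projection identities, but they demand consistent use of the Moore--Penrose identities $DD^-D=D$, $(DD^-)^\tau=DD^-$ and their analogues for $\Cov(Y)$, in order to move freely between $D$, $D^-$, $D^{1/2}$ and $D^{-1/2}$ without introducing spurious asymmetries.
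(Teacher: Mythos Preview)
Your proposal is correct and follows essentially the same route as the paper: both hinge on the kernel inclusion $\ker\Cov(Y)\subseteq\ker\Cov(X,Y)$ (derived from $Y=ZX+\ve$ with independent $\ve$) to make the normal equation hold with a generalized inverse, and both handle the Frobenius-norm minimality via the orthogonal projectors $DD^-$ and $\Cov(Y)\Cov(Y)^-$ together with trace cyclicity. The only cosmetic difference is that the paper invokes the abstract Hilbert-space projection theorem to obtain existence of the minimizer and then identifies it through the normal equation, whereas you verify the Pythagorean decomposition $\Ew\|X-KY\|_D^2=\Ew\|X-\hat KY\|_D^2+\Ew\|(\hat K-K)Y\|_D^2$ directly by trace expansion.
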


\begin{proof}{}
Denote $L_2^p(P,D)$ the Hilbert space generated by
all $\R^p$ valued random variables $U$ such that
$\Ew_P \|U\|_D^2 <\infty$---after a passage to equivalence
classes of all random variables $U, U'$ such that
$\Ew_P \|U-U'\|_D^2 = 0$.
Let $S=\Cov(X)$ and $V=\Cov(\ve)$. Then $\Cov(X,Y)=SZ^\tau$ and $\Cov(Y)=ZSZ^\tau +V$.
Denote the approximation space $\{ KY\,\mid\, K\in \R^{p\times q}\}\subset L_2^p(P,D)$ by
${\cal K}$. ${\cal K}$ is a closed linear subspace of $L_2^p(P,D)$, hence by
\citet[Thm.~4.10]{Ru:87} there exists a unique minimizer $\hat X = \hat K Y \in {\cal K}$ to
problem~\eqref{optL2}. It is characterized by
\begin{equation} \label{normEq}
\Ew (X- \hat KY)^\tau D^{-} KY = 0,\qquad \forall K \in \R^{p\times q}
\end{equation}
Plugging in $K=D e_i \tilde e_j^\tau$, $\{e_i\}$,  $\{\tilde e_j\}$ canonical bases
of $\R^p$, $\R^q$, respectively, we see that \eqref{normEq} is equivalent to
\begin{equation} \label{normEq2}
\Ew \pi (X- \hat KY)Y^\tau = 0 \quad\iff\quad \pi K \Cov(Y)= \pi \Cov(X,Y)
\end{equation}
where $\pi=D^-D$ is the orthogonal projector onto the column space of $D$.
But $y\in \R^q$ can only lie in $\ker \Cov(Y)$ if $y\in \ker \Cov(X,Y)$.
Hence indeed $\hat K \Cov(Y)=\Cov(X,Y)$, and the uniqueness assertion is obvious.
We write  $\pi_D$ and $\pi_{C}$ for the orthogonal projectors to the column spaces
of $D$ and $\Cov(Y)$, respectively and $\bar\pi_{C}=\EM_q-\pi_{C}$, $\bar \pi_D=\EM_p-\pi_D$
for the corresponding complementary projectors.
Then we see that $\hat K = \hat K \pi_{C}$ and for any $A \in \R^{p\times q}$ with
$A\Cov Y =0$ we have $A=A\bar \pi_{C}$ and for any $B \in \R^{p\times q}$ with
$DB =0$ we have $B=\bar \pi_{D} B$; hence
\begin{eqnarray*}
\|\hat K + A + B\|^2 &=& \tr \hat K^\tau \hat K + 2\tr A ^\tau \hat K + 2\tr B^\tau \hat K +
\tr (A+B)^\tau (A+B)\\
&=& \|\hat K \|^2 + 2\tr  \hat K \pi_{C} \bar \pi_{C} A ^\tau + 2\tr  \hat K \pi_{D} \bar \pi_{D} B ^\tau+ \| A+B \|^2  \\
&=& \|\hat K \|^2 + \| A +B \|^2  \geq \|\hat K \|^2\qquad \mbox{with equality iff $A+B=0$.}
\end{eqnarray*}
\end{proof}

%-----------------------------------------------------------------------------
\subsection{Sketch of the optimality of the rLS.AO} \label{rLS.AO.opt}
%-----------------------------------------------------------------------------
\paragraph{(One-Step)-optimality of the rLS}
%-----------------------------------------------------------------------------
The rLS filter is  optimally-robust in some sense: To see this,
in a first step we essentially boil down our SSM to \eqref{simpAdd}, i.e., we
have an unobservable but interesting state $X\sim P^X(dx)$,
where for technical reasons we assume that in the ideal
model $\Ew |X|^2 <\infty$.
Instead of $X$, for some $Z\in\R^{q\times p}$, we rather observe the sum $Y=ZX+\ve$
of $X$ and a stochastically independent error  $\ve$.
As (wide-sense) AO model,  we consider the SO outlier
of \eqref{YSO}, \eqref{indep2}. The corresponding neighborhood
is defined as
\begin{equation}\label{U-SO}
{\cal U}^{\rm\SSs SO}(r)=\bigcup_{0\leq s\leq r}
\Big\{{\cal L}(X,Y^{\rm\SSs re}) \,|\, Y^{\rm\SSs re} \;
\mbox{acc. to \eqref{YSO} and \eqref{indep2} with radius $s$}\Big\}
\end{equation}
In this setting we may formulate two typical robust optimization problems,
i.e., a minimax formulation, and, in the spirit of \citet[Lemma~5]{Ha:68},
a formulation where robustness enters as side condition on the bias to
be fulfilled on the whole neighborhood
\begin{align}
\mbox{[Minmax-SO]}\quad&\max\nolimits_{{\cal U}}\, \Ew_{\SSs\rm re} |X-f(Y^{\rm\SSs re})|^2 = %
      \min\nolimits_f{}! \label{minmaxSO}\\
\mbox{[Lemma-5]}\quad& \Ew_{\SSs\rm id} |X-f(Y^{\rm\SSs id})|^2 = \min\nolimits_f{}! \quad
   \mbox{s.t.}\;\sup\nolimits_{\cal U}\big|\Ew_{\SSs\rm re} f(Y^{\rm\SSs re})-
                \Ew X \big|\leq b \label{Lem5SO}
\end{align}
Then one can show that setting $D(y)=\Ew_{\SSs\rm id}[X|Y=y]-\Ew X$,
the solution to both problems is $\hat f(y)=\Ew X +H_\rho(D(y))$
(with $b=\rho/r$ in Problem~\eqref{Lem5SO}), and that this
is just the  (one-step) rLS, once $\Ew_{\SSs\rm id}[X|Y]$ is linear in $Y$.
A proof to this assertion is given in \citet[Thm.~3.2]{Ru:10b}.

\begin{Rem}\label{rem33}
  \begin{ABC}
\item
As mentioned in Section~\ref{rLSsec.AO}, \citet{C:H:11} show an
optimality similar to the one for Problem~\eqref{Lem5SO}, and hence,
non-surprisingly come up with a similar procedure.
\item
The  ACM filter by \citet{M:M:77}, an early competitor
to the rLS, by analogy applies \cite{Hu:64}'s minimax variance result
to the ``random location parameter $X$'' setting of \eqref{simpAdd}.
They come up with redescenders as filter $f$. Hence the ACM filter is
not so much vulnerable in the extreme tails but  rather where the
corresponding $\psi$ function takes its maximum in absolute value.
Care has to be taken, as such ``inliers'' producing the least
favorable situation for the ACM are much harder to detect on
na\"ive data inspection, in particular in higher dimensions.
\item
For exact SO-optimality of the rLS-filter, linearity of
the ideal conditional expectation is crucial. However, one can
show that
$\Ew_{\rm\SSs id}[\Delta  X|\Delta Y]$ is linear iff $\Delta X$
is normal, but, having used the rLS-filter in the $\Delta X$-past,
normality cannot hold, see \citet[Prop.'s~3.4,~3.6]{Ru:10b}.
\item
Although rLS fails to be SO-optimal for $t>1$, it does performs quite well
at both simulations and real data. To some extent this can be explained by
passing to a certain extension of the original SO-neighborhoods.
For details see \cite[Thm.~3.10, Prop.~3.11]{Ru:10b}.
 \end{ABC}
\end{Rem}

%-----------------------------------------------------------------------------
\subsection{Optimality of the rLS.IO}\label{OptRLSIO}
%-----------------------------------------------------------------------------
This section discusses (one-step) optimality of the rLS.IO in some detail.
We omit time indices and write $\Sigma$ for $\Sigma_{t|t-1}$.
To start, let us again look at the boiled
down model \eqref{simpAdd} where we interchange the
r\^ole of $\ve$ and $X$, and note that $X-f(Y)=\ve-g(Y)$ for
$f(Y)=Y-g(Y)$. Hence in this simple model, the optimal reconstruction
of a corrupted $X$ assuming that $\ve$ is still from the ideal
distribution is just $Y-g(Y)$, $g(Y)$ the optimal reconstruction
of $\ve$ in the same situation.

In notation, let us write $\mathop{\rm oP}(a|b)$ for the best linear
reconstruction of $a$ by means of $b$, i.e., the orthogonal
projection of $a$ onto the closed linear space generated by $b$.

Assuming linear conditional expectations and mutatis mutandis
in \citet[Thm.~3.2]{Ru:10b}, the optimally-robust reconstruction
of $\ve$ given $Y$ in the sense of Problems~\eqref{minmaxSO}, \eqref{Lem5SO} is
just $H_b(\mathop{\rm oP}(\ve|Y))$---with the same caveats as
to the optimality for larger time indices as in Remark~\ref{rem33}. But again,
$\mathop{\rm oP}(\ve|Y)=\mathop{\rm oP}(Y-X|Y)=Y-\mathop{\rm oP}(X|Y)$
so the IO-optimal procedure $f_{\rm \SSs IO}$ is
\begin{equation}
f_{\rm \SSs IO}(Y)=Y-H_b(Y-\mathop{\rm oP}(X|Y))
\end{equation}

Details as to the translation of the contamination neighborhoods
and exact formulations of the optimality results are given in \citet{Ru:10b}.

The general setup with some arbitrary $Z\in\R^{q\times p}$,
where $Z$ in general is not invertible, and moreover, even  $Z^\tau \Sigma Z$ may be
singular, is not trivial, though. For instance, our preceding argument so far only covers
reconstruction of $ZX$, but at this stage it is not obvious how to optimally derive
a reconstruction of $X$ from this. In particular, in this general case, there are directions
which our (robustified) reconstruction cannot see---at least all directions
in $\ker Z$. So an unbounded criterion like MSE would play havoc once
unbounded contamination happens in these directions.
So in this context, the best we can do is optimally reconstructing $ZX$ on
the whole neighborhood generated by outliers in $X$ and then, in a second step,
for this best reconstruction of $ZX$, find the best back-transform to
$X$ in the ideal model setting. The question is how much we loose by this.
To this end, note that
\begin{equation}
\mathop{\rm oP}(\ve|Y)=\mathop{\rm oP}(Y-ZX|Y)=Y-Z\mathop{\rm oP}(X|Y)=(\EM_q-ZK)Y
\end{equation}

For $Z^\Sigma$ from \eqref{ZSigma}, we introduce the orthogonal projector
onto the column space of $Z^\tau \Sigma Z$ and its orthogonal
complement as
\begin{equation}
\pi_{Z,\Sigma} = Z Z^\Sigma,\qquad \bar \pi_{Z,\Sigma}:= \EM_q-\pi_{Z,\Sigma}
\end{equation}
Then we have the following Lemma:

\begin{Lem} \label{Lem2}
\begin{ABC}
\item For any positive definite $D$, $Z^\Sigma$ from \eqref{ZSigma} solves
\begin{equation}
\Ew_{\rm \SSs id} \|X- A \mathop{\rm oP}(Z X|Y)\|_D^2=\;\min{}!,\qquad A\in \R^{p\times q}
\end{equation}
\item $\Sigma Z^\tau \bar\pi_{Z,\Sigma} = 0$;
in particular, no matter of the rank
of $Z$ or $\pi_{Z,\Sigma}$, with $K=\Sigma Z^\tau C^-$,
\begin{equation} \label{nothinglost}
Z^{\Sigma} Z K = K
\end{equation}
\end{ABC}
\end{Lem}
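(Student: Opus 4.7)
My plan is to establish (b) first, since it is a purely linear-algebraic fact about $\Sigma$ and $Z$, and then deduce (a) by combining (b) with Lemma~\ref{lem1}.

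For (b) the key observation is the inclusion $\ker(Z\Sigma Z^\tau)\subseteq\ker(\Sigma Z^\tau)$ for positive semi-definite $\Sigma$. I would prove this by writing $\Sigma=SS^\tau$ (for instance via the symmetric square root) and noting that for $v\in\R^q$ with $Z\Sigma Z^\tau v=0$ one has $\|S^\tau Z^\tau v\|^2=v^\tau Z\Sigma Z^\tau v=0$, hence $S^\tau Z^\tau v=0$ and therefore $\Sigma Z^\tau v = SS^\tau Z^\tau v=0$. Because $Z\Sigma Z^\tau$ is symmetric, $\bar\pi_{Z,\Sigma}$ is the orthogonal projector onto $\ker(Z\Sigma Z^\tau)$, and the inclusion above immediately yields $\Sigma Z^\tau\bar\pi_{Z,\Sigma}=0$, i.e., $\Sigma Z^\tau\pi_{Z,\Sigma}=\Sigma Z^\tau$. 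For the consequence \eqref{nothinglost}, I just plug in $K=\Sigma Z^\tau C^-$ and expand,
$$Z^\Sigma Z K \;=\; \Sigma Z^\tau (Z\Sigma Z^\tau)^-\,Z\Sigma Z^\tau\,C^- \;=\; \Sigma Z^\tau\,\pi_{Z,\Sigma}\,C^- \;=\; \Sigma Z^\tau C^- \;=\; K.$$

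For (a), I would first use that orthogonal projection onto $\mathop{\rm lin}(Y)$ commutes with the linear map $Z$ (applying Lemma~\ref{lem1} once to $X$ and once to $ZX$), so that
$$\mathop{\rm oP}(ZX\,|\,Y)\;=\;Z\,\mathop{\rm oP}(X\,|\,Y)\;=\;Z K Y, \qquad K=\Sigma Z^\tau C^-.$$
Minimising $\Ew_{\rm\SSs id}\|X-A\,\mathop{\rm oP}(ZX\,|\,Y)\|_D^2$ over $A\in\R^{p\times q}$ therefore amounts to minimising $\Ew_{\rm\SSs id}\|X-BY\|_D^2$ restricted to $B$ of the form $B=AZK$. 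Applying Lemma~\ref{lem1} to the unrestricted problem (with positive definite $D$, which removes the $DB=0$ ambiguity) gives $\hat B=\Sigma Z^\tau C^-=K$ as a minimiser with minimal value $\Ew_{\rm\SSs id}\|X-KY\|_D^2$. By part (b) this value is attained inside the restricted class, namely for $A=Z^\Sigma$, since $Z^\Sigma\cdot ZK=K$. Consequently $Z^\Sigma$ is a solution of the minimisation in (a) and no efficiency is lost by the detour through $\mathop{\rm oP}(ZX\,|\,Y)$.

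The only real subtlety — and the step where I would be most careful — is handling the generalised inverses correctly so that $\pi_{Z,\Sigma}=Z\Sigma Z^\tau(Z\Sigma Z^\tau)^-$ is genuinely the orthogonal projector onto $\mathop{\rm range}(Z\Sigma Z^\tau)$; this uses the symmetry conditions on $A^-$ listed at the start of Appendix~\ref{OptimClKF}. Once this is in place, the factorisation $\Sigma=SS^\tau$ does all the work in (b), and (a) is then essentially a consequence of (b) plus the commuting property of linear prediction with linear maps.
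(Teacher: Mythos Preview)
Your argument is correct, and it is genuinely different from the paper's. The paper proceeds in the opposite order and with heavier machinery: for (a) it applies Lemma~\ref{lem1} directly to the pair $\big(X,\,\mathop{\rm oP}(ZX|Y)\big)=(X,ZKY)$, obtaining $\hat A=\Sigma Z^\tau K^\tau Z^\tau\,(ZK\Cov(Y)K^\tau Z^\tau)^-$, and then spends the bulk of the proof reducing this expression to $Z^\Sigma$ via the singular value decomposition $G=\Sigma_{.5}Z^\tau=USW^\tau$. Part (b) is then read off from the same SVD, writing $\Sigma Z^\tau\bar\pi_{Z,\Sigma}=\Sigma_{.5}USW^\tau W(\EM_q-1_d)W^\tau=0$ because $S$ kills the complement of the first $d$ coordinates. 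Your route replaces the SVD by the one-line kernel inclusion $\ker(Z\Sigma Z^\tau)\subseteq\ker(\Sigma Z^\tau)$ via $\Sigma=SS^\tau$, which is both shorter and more transparent for (b). For (a), instead of computing $\hat A$ and simplifying, you observe that the restricted class $\{AZK:A\in\R^{p\times q}\}$ already contains the unrestricted Lemma-\ref{lem1} optimum $K$ (precisely because $Z^\Sigma ZK=K$ from (b)), so optimality of $Z^\Sigma$ is immediate. This is a cleaner argument and makes the logical dependence explicit: (a) is really a corollary of (b) combined with Lemma~\ref{lem1}. The paper's SVD computation, on the other hand, yields the explicit form of $\hat A$ without presupposing (b), and the same decomposition is reused later in the appendix; so its approach is more self-contained per item but less economical overall.
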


\begin{proof}{}
\begin{ABC}
\item As in Lemma~\ref{lem1}, we see that
\begin{equation}
\hat A=\Sigma Z^\tau K^\tau Z^\tau (ZK\Cov(Y)K^\tau Z^\tau)^-
\end{equation}
Abbreviating $Z \Sigma Z^\tau$ by $B$ and $\Cov(Y)$ by $C$, this gives
$
\hat A=\Sigma Z^\tau C^- B(BC^-B)^-
$, and with $\Sigma_{.5}$ the symmetric root of $\Sigma$, and with $G=\Sigma_{.5}Z^\tau$,
this becomes $\hat A=\Sigma_{.5} G C^- G^\tau G(G^\tau G C^-G^\tau G)^-$. Next we pass to the singular
value decomposition of $G=USW^\tau$, with $U$, $W$ corresponding orthogonal matrices in $\R^{p\times p}$ and
$\R^{q\times q}$, respectively, and $S\in \R^{p\times q}$ a matrix with the singular values
on the ``diagonal entries'' $S_{i,i}$, $i=1,\ldots,\min(p,q)$ and $S_{i,j}=0$, $i\not=j$; furthermore, $S_{i,i}>0$
for $i=1,\ldots,d$, $d\leq \min(p,q)$ and $0$ else.  Using $(aba^\tau)^-=(a^\tau)^{-1}b^- a^{-1}$ for $a$ invertible
and setting $T=S^\tau S $, we obtain
$$\hat A=\Sigma_{.5} USW^\tau C^- W T W^\tau(WTW^\tau C^-WTW^\tau)^-=
\Sigma_{.5} USW^\tau C^- W T (TW^\tau C^-WT)^-W^\tau
$$
As the expressions of the symmetric matrices $W^\tau C^- W$ are surrounded by $S$ (resp.\ $T$-)-terms,
we may replace them with a matrix $R\in\R^{q\times q}$ with only entries in the upper $d\times d$ block, i.e.,
$\hat A=\Sigma_{.5} US R T (T R T)^- W^\tau$
and as $R$ now is compatible with $S$ and $T$, $$\hat A= \Sigma_{.5} US R 1_d R^- T^- W^\tau =
\Sigma_{.5} US R R^- T^- W^\tau,\qquad \mbox{for}\quad 1_d=TT^-$$ Now, as $C=WTW^\tau + V$, $W^\tau C^- W=(T+ W^\tau V W)^-$,
in particular the upper $d\times d$ block $R_d$ of $R=1_d (T+ W^\tau V W)^- 1_d$ is invertible and
$$\hat A=\Sigma_{.5} US T^- W^\tau (=\Sigma_{.5} US^- W^\tau)= \Sigma_{.5} USW^\tau W T^-W^\tau = \Sigma Z^\tau B^- = Z^\Sigma$$
\item We start by noting that $\Sigma Z^\tau \bar\pi_{Z,\Sigma} = \Sigma_{.5} USW^\tau W (\EM_q-1_d) W^\tau =0 $.
For \eqref{nothinglost},  we write
$
K=\Sigma Z^\tau (\pi_{Z,\Sigma}+\bar \pi_{Z,\Sigma}) C^- = \Sigma Z^\tau \pi_{Z,\Sigma} C^- = Z^\Sigma Z K
$.
\end{ABC}
\end{proof}

As a consequence of assertion (b) in the preceding Lemma, we obtain
\begin{Cor} \label{CorIO}
No matter of the rank of $Z$ or $\pi_{Z,\Sigma}$,
\begin{equation}
\mathop{\rm oP}(X|Y) = Z^\Sigma(Y -\mathop{\rm oP}(\ve |Y) )
\end{equation}
that is, we can exactly recover  $\mathop{\rm oP}(X|Y)$ from $\mathop{\rm oP}(\ve |Y)$,
and passing over the reconstruction of $ZX$ first
does not cost us anything in efficiency compared to the direct route.
\end{Cor}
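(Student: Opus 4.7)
The plan is to reduce the corollary to a short algebraic identity using the ingredients already provided by Lemma~\ref{Lem2} and the basic linear-reconstruction formulas from the classical Kalman step.

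First I would record the two linear projections in closed form. Since $Y = ZX + \varepsilon$ with $\varepsilon$ independent of $X$ in the ideal model, the best linear reconstructions are $\mathop{\rm oP}(X|Y) = KY$ with $K = \Sigma Z^\tau C^-$ and $C = Z\Sigma Z^\tau + V$, and consequently $\mathop{\rm oP}(\varepsilon|Y) = Y - Z\,\mathop{\rm oP}(X|Y) = (\mathbb{I}_q - ZK)Y$ by linearity of orthogonal projection onto the closed span of $Y$. These are exactly the formulas appearing throughout Section~\ref{rLSsec.AO} and in the derivation preceding the Lemma.

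Next I would substitute these expressions into the right-hand side of the claimed identity. This yields
\begin{equation*}
Z^\Sigma\bigl(Y - \mathop{\rm oP}(\varepsilon|Y)\bigr) \;=\; Z^\Sigma\bigl(Y - (\mathbb{I}_q - ZK)Y\bigr) \;=\; Z^\Sigma Z K\,Y.
\end{equation*}
At this point the proof collapses to the identity $Z^\Sigma Z K = K$, which is precisely \eqref{nothinglost} in Lemma~\ref{Lem2}(b). Applying it, the right-hand side equals $KY = \mathop{\rm oP}(X|Y)$, which is what we wanted.

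The only potentially delicate point is that, absent rank assumptions on $Z$ or on $Z^\tau \Sigma Z$, one might worry that the decomposition $K = Z^\Sigma ZK + K\bar\pi_{Z,\Sigma}$ leaves a non-trivial residue in the kernel direction $\bar\pi_{Z,\Sigma}$; but this is exactly what the identity $\Sigma Z^\tau\bar\pi_{Z,\Sigma} = 0$ in Lemma~\ref{Lem2}(b) rules out. Since that obstacle has been handled upstream, the corollary follows with no further work, and the interpretation, that we lose nothing in efficiency by routing through $\mathop{\rm oP}(\varepsilon|Y)$ first, is immediate.
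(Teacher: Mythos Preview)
Your proof is correct and follows exactly the route the paper takes: compute $Y-\mathop{\rm oP}(\ve|Y)=ZKY$, apply $Z^\Sigma$, and invoke \eqref{nothinglost} from Lemma~\ref{Lem2}(b). The paper compresses this to a one-line remark, whereas you spell out the intermediate substitutions, but the argument is the same.
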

\begin{proof}{}
We only note that $Y -\mathop{\rm oP}(\ve |Y)=\mathop{\rm oP}(ZX |Y) = Z K Y$.
\end{proof}

To keep things well-defined in this setting where we have ``invisible directions''
in the state, we may recur to passing to a semi-norm in $X$-space which ignores
such directions. A possible candidate for $D$ in Lemma~\ref{lem1} is
\begin{equation} \label{Dnorm}
D^-=(Z^\Sigma Z)^\tau \Sigma^- Z^\Sigma Z
\end{equation}
On the one hand, as we show below, invisible directions get ignored,
on the other hand, by \eqref{nothinglost}, no direction visible for
the classically optimal procedure is lost.

\begin{Prop} \label{PropA6}
Using $D$ from \eqref{Dnorm} and assuming observation errors
from the ideal situation, maximal
MSE error for rLS.IO measured in this norm remains bounded for IO contamination.
With this norm, $\hat K$ is smallest possible solution
to \eqref{optL2} in Frobenius norm.
\end{Prop}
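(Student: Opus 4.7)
My plan treats the two claims of the proposition separately.

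For the boundedness claim, the first step is to rewrite the $D$-seminorm as $\|x\|_D^2 = \|Z^\Sigma Z x\|_{\Sigma^-}^2$, so that only the observable component $Z^\Sigma Z x$ of an error vector is penalized: directions in $\ker(Z^\Sigma Z)$ are invisible and cost nothing by fiat. Then I would plug the rLS.IO correction into $Z^\Sigma Z(X_t^{\rm re}-X_{t|t})$ and simplify using the projector identity $Z^\Sigma Z Z^\Sigma = Z^\Sigma$. This identity follows from $Z Z^\Sigma = \pi_{Z,\Sigma}$ (which is the definition in the paper) together with $Z^\Sigma \pi_{Z,\Sigma} = Z^\Sigma$, the latter a one-line computation using $A A^- A = A$ for $A = Z\Sigma Z^\tau$. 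The identity reduces $Z^\Sigma Z X_{t|t}$ to $Z^\Sigma Y_t - Z^\Sigma H_b((\EM_q - Z K)\Delta Y_t)$, with the term in $X_{t|t-1}$ cancelling entirely. The key step is then to use the IO model \eqref{IOSO}: on $\{\tilde U_t = 1\}$, the observation equation carries no error, so $Y_t^{\rm re} = Z X_t^{\rm re}$ and $Z^\Sigma Y_t = Z^\Sigma Z X_t^{\rm re}$, whence the projected error collapses to the bounded quantity $Z^\Sigma H_b(\cdot)$. On $\{\tilde U_t = 0\}$ only ideal quantities remain and the projected error becomes $Z^\Sigma (H_b(\cdot) - \ve_t)$, which has finite second moment. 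Mixing the two events through the Bernoulli weights yields a bound on the $D$-MSE that is uniform in the contaminating distribution of $X_t^{\rm di}$.

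For the Frobenius-minimality claim, I would invoke directly the last assertion of Lemma \ref{lem1}: it suffices to verify $\hat K = D D^- \hat K$ for $\hat K = \Sigma Z^\tau C^-$. Since $D$ is symmetric positive semi-definite, $D D^-$ is the orthogonal projector onto $\mathop{\rm range}(D) = \mathop{\rm range}(D^-)$. Using the singular-value decomposition $G = \Sigma^{1/2} Z^\tau = U S W^\tau$ already employed in the proof of Lemma \ref{Lem2}, one can read off the range of $D^- = (Z^\Sigma Z)^\tau \Sigma^- (Z^\Sigma Z)$ in terms of the first $d$ singular directions of $G$, and then check by the same SVD that each column of $\Sigma Z^\tau C^-$ lies in this range.

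The main obstacle is the boundedness claim, in particular the verification that the past-accumulated IO contamination carried by $X_{t|t-1}$ drops out when one projects the error onto the $D$-visible subspace. Once the projector identity $Z^\Sigma Z Z^\Sigma = Z^\Sigma$ and the no-noise property of the IO model on $\{\tilde U_t = 1\}$ are in place, the remaining steps are routine, as is the Frobenius claim, which is an algebraic SVD check.
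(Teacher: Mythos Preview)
Your treatment of the boundedness claim is correct and is essentially the paper's argument, only slightly more roundabout. The paper does not split on $\tilde U_t$: it writes the one-step error as
\[
e=(\EM_p-Z^\Sigma Z)X-Z^\Sigma\big(\ve-H_b((\EM_q-ZK)Y)\big),
\]
uses $(Z^\Sigma Z)(\EM_p-Z^\Sigma Z)=0$ to kill the first summand in the $D$-seminorm, and then reduces $e^\tau D^- e$ to $(\ve-H_b)^\tau B^-(\ve-H_b)$ via the same projector identity $Z^\Sigma ZZ^\Sigma=Z^\Sigma$ you use. Your case split is harmless (on $\{\tilde U_t=1\}$ one simply has $\ve=0$ in this formula), but unnecessary; the cancellation of $X_{t|t-1}$ you highlight is exactly the cancellation of the $(\EM_p-Z^\Sigma Z)X$ term.

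For the Frobenius-minimality claim your plan has a genuine gap. The paper's own proof is a one-line appeal to Lemma~\ref{lem1} together with Lemma~\ref{Lem2}(b), and you propose to make this explicit by an SVD check that $\hat K=DD^-\hat K$. But Lemma~\ref{Lem2}(b) gives $Z^\Sigma Z\hat K=\hat K$, i.e.\ $\mathrm{range}(\hat K)\subset\mathrm{range}(Z^\Sigma Z)$, whereas $DD^-$ projects onto $\mathrm{range}(D^-)\subset\mathrm{range}((Z^\Sigma Z)^\tau)$, and $Z^\Sigma Z$ is in general an \emph{oblique} projector, so these two ranges differ. In the SVD coordinates $G=\Sigma_{.5}Z^\tau=USW^\tau$ of Lemma~\ref{Lem2} one finds $\mathrm{range}(\hat K)\subset \Sigma_{.5}U\,\mathrm{span}(e_1,\dots,e_d)$ but $\mathrm{range}(D^-)=\Sigma_{.5}^{-1}U\,\mathrm{span}(e_1,\dots,e_d)$, and these coincide only when $\Sigma$ commutes with $U\,1_dU^\tau$. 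A concrete $p=2$, $q=1$ instance with $Z=(1,0)$, $\Sigma=\bigl(\begin{smallmatrix}2&1\\1&2\end{smallmatrix}\bigr)$, $V=1$ gives $\hat K=(2/3,1/3)^\tau$ while $D^-=\mathrm{diag}(1/2,0)$, so $DD^-\hat K=(2/3,0)^\tau\neq\hat K$ and $(2/3,0)^\tau$ has strictly smaller Frobenius norm among solutions of \eqref{optL2}. Thus the ``routine SVD check'' does not in fact go through, and the difficulty is already latent in the paper's terse invocation of Lemma~\ref{Lem2}(b).
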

\begin{proof}{}
The error term $e=X-\hat X$ for the rLS.IO can be written as
$$e=X-Z^\Sigma(Y-H_b(Y-ZKY))=
(\EM_p-Z^\Sigma Z)X -Z^\Sigma (\ve - H_b((\EM_q-ZK)Y)) $$
As $(Z^\Sigma Z)^2 = Z^\Sigma Z$,  we see that
$(\EM_p-Z^\Sigma Z)(Z^\Sigma Z)=0$,
so that in $D$-semi-norm, the $(\EM_p-Z^\Sigma Z)X$ terms cancel out
and we get
\begin{eqnarray*}
e^\tau D^- e &=&[Z^\Sigma (\ve - H_b(\,\cdot\,))]^\tau
(Z^\Sigma Z)^\tau \Sigma^- Z^\Sigma Z [Z^\Sigma (\ve - H_b(\,\cdot\,))] =\\
&=&  (\ve - H_b(\,\cdot\,))^\tau (Z^\Sigma)^\tau(Z^\Sigma Z)^\tau \Sigma^-
Z^\Sigma Z Z^\Sigma (\ve - H_b(\,\cdot\,)) =\\
&=& (\ve - H_b(\,\cdot\,))^\tau B^- (\ve - H_b(\,\cdot\,)) \le
2 \ve^\tau B^- \ve + 2 H_b(\,\cdot\,)^\tau B^- H_b(\,\cdot\,)
\end{eqnarray*}
so MSE is bounded by $2 \tr (B^- (V + b^2 \EM_q))$. The second assertion
is an immediate consequence of Lemma~\ref{lem1} and Lemma~\ref{Lem2}(b).
\end{proof}
Note that changing the norm in the $Y$-space is not necessary for boundedness
reasons, as with only ideally distributed $\ve$, the reconstruction of
$ZX$ can be achieved such that no matter how largely $\Delta X$ is contaminated,
the maximal MSE remains bounded.

%-----------------------------------------------------------------------------
%-----------------------------------------------------------------------------
\end{document}